\numberwithin{equation}{section}
\newtheorem{Thm}{Theorem}[section]
\newtheorem{Prop}[Thm]{Proposition}
\newtheorem{Lem}[Thm]{Lemma}
\theoremstyle{definition}
\newtheorem{Expl}[Thm]{Example}
\newtheoremstyle{customNumber}
     {}          
     {}          
     {\itshape}  
     {}          
     {\bfseries} 
     {.}         
     { }         
     {\thmname{#1}\thmnumber{ #2}\thmnote{ #3}}
\theoremstyle{customNumber}
\newtheorem*{ThmCustomNumber}{Theorem}
\newcommand{\Q}{\mathbb{Q}}
\newcommand{\R}{\mathbb{R}}
\newcommand{\Z}{\mathbb{Z}}
\newcommand{\N}{\mathbb{N}}
\newcommand{\bary}{\operatorname{bar}}
\newcommand{\CAT}{\operatorname{CAT}}
\newcommand{\del}{\delta}
\newcommand{\eps}{\varepsilon}
\newcommand{\gam}{\gamma}
\newcommand{\lam}{\lambda}
\renewcommand{\phi}{\varphi}
\renewcommand{\rho}{\varrho}
\newcommand{\sig}{\sigma}
\newcommand{\olX}{\,\overline{\!X}}
\newcommand{\diam}{\operatorname{diam}}
\newcommand{\di}{\partial}
\title{Asymptotic rank of spaces with bicombings}
\author{Dominic Descombes}
\date{October 19, 2015}
\begin{document}


\pdfbookmark[0]{Asymptotic rank of spaces with bicombings}{titleLabel}
\maketitle

\begin{abstract}
The question, under what geometric assumptions on a space $X$ an $n$-quasiflat in $X$ implies the existence of an $n$-flat therein, has been investigated for a long time.
It was settled in the affirmative for Busemann spaces by Kleiner, and for manifolds of non-positive curvature it dates back to Anderson and Schroeder. We generalize the theorem of Kleiner to spaces with bicombings. This structure is a weak notion of non-positive curvature, not requiring the space to be uniquely geodesic. Beside a metric differentiation argument, we employ an elegant barycenter construction due to Es-Sahib and Heinich by means of which we define a Riemannian integral serving us in a sort of convolution operation.
\end{abstract}
 

\section{Introduction}

By a 
{\em geodesic bicombing\/}\label{def:bicombing} $\sig$ on a metric space $(X,d)$ we mean a map 
\[
\sig \colon X \times X \times [0,1] \to  X
\]
that singles out a constant speed geodesic
$\sig_{xy} := \sig(x,y,\cdot)$ from $x$ to $y$ (that is, 
$d(\sig_{xy}(t),\sig_{xy}(t')) = |t - t'|d(x,y)$ for all $t,t' \in [0,1]$, 
and $\sig_{xy}(0) = x$, $\sig_{xy}(1) = y$) for every pair $(x,y) \in X \times X$. 
The weak non-positive curvature condition we impose on $\sig$ is the {\em conical\/} property
\begin{equation} \label{eq:conical}
d(\sig_{xy}(t),\sig_{x'y'}(t)) \le (1-t)\,d(x,x') + t\,d(y,y')
\;\, \text{for all $\,t \in [0,1]$} 
\end{equation}
and $x,y,x',y' \in X$. 
Observe that this does not imply that $t\mapsto d(\sig_{xy}(t),\sig_{x'y'}(t))$ is a convex function on $[0,1]$ (which would be the case in Busemann and $\CAT(0)$ spaces).
Because we work exclusively with conical geodesic bicombings, we suppress the first part throughout; a {\em bicombing} is thus implicitly understood to be conical and geodesic.
Also note that the conical property implies that $\sig$ is continuous and, in particular, $X$ 
is contractible via \mbox{$X\times[0,1]\to X$}, $(x,t)\mapsto \sig_{ox}(1-t)$ for an arbitrary choice of basepoint $o\in X$. Furthermore, we will say that a bicombing $\sig$ is {\em reversible\/} if 
\begin{equation} \label{eq:reversible}
\sig_{xy}(t) = \sig_{yx}(1-t) \quad \text{for all $t \in [0,1]$} 
\end{equation}
and $x,y \in X$. Occasionally, when readability demands it, we will write $[x,y]$ instead of $\sig_{xy}$ for the geodesics of a bicombing. And, when using this notation, we do not explicitly distinguish between $[x,y]$ as a map and its image (or trace) as long as no confusion arises.
Next, let us mention here that there is no loss of generality in assuming a space $X$ with (reversible) bicombing $\sig$ to be complete. As for two points $x,y$ in the completion $\olX$, we may (and must) set $\phantom{\,}\overline{\!\sig}_{xy}:=\lim_{k\to\infty} \sig_{x_ky_k}$ given two sequences $x_k\to x, y_k\to y$ with $x_k,y_k\in X$. For every $t\in[0,1]$, the sequence $\sig_{x_ky_k}(t)$ is then Cauchy and the convergence $\sig_{x_ky_k}\!\to\overline{\!\sig}_{xy}$ is uniform and the limit independent of the chosen sequences by \eqref{eq:conical}. Finally, and since the main theorem is about proper spaces, let us note here (and prove at the end of this introduction) that given a proper space with bicombing, we may always construct a new bicombing that is reversible. For a systematic study of spaces with bicombings we refer to \cite{DesL1,DesL2}.

The wording asymptotic rank for a space $X$ was coined by Wenger in \cite{Wen} and is (equivalently) defined to be the supremum over all $n$ for which there are sequences $R_k\in(0,\infty)$, $S_k\subset X$, and a normed vector space $(\R^n,\|\cdot\|)$, so that $R_k\to\infty$, and $\frac{1}{R_k}S_k$ converges to the unit ball $B\subset(\R^n,\|\cdot\|)$ in the Gromov\/-Hausdorff topology --- compare the assumptions of Theorem~\ref{Thm:D} below. The unit ball $B$ is then said to be an asymptotic subset of $X$. Preceding the work of Wenger and in the context of Busemann spaces, Kleiner showed an equivalence between this asymptotic rank and the more classical Minkowski rank defined as follows. The {\em Minkowski rank} of a space $X$ is the supremum over all $n$ for which there is a $n$-dimensional normed space embedding isometrically into $X$.
The main result for Busemann spaces, the one we are about to generalize, is Theorem D in \cite{Kle}. Theorem~\ref{Thm:D} forms the missing piece to carry over (most of) Kleiner's Theorem~D to our setting and generalizes the analogous Proposition~10.22 in \cite{Kle} (with a slight improvement even in the case of Busemann spaces).
\begin{Thm}\label{Thm:D} Let $X$ be a proper metric space with a bicombing $\sig$ and
cocompact isometry group.
Suppose there are sequences $R_k\in(0,\infty)$, $S_k\subset X$, and a normed vector space $(\R^n,\|\cdot\|)$, so that $R_k\to\infty$, and $\frac{1}{R_k}S_k$ converges to the unit ball $B\subset(\R^n,\|\cdot\|)$ in the Gromov\/-Hausdorff topology. Then $(\R^n,\|\cdot\|)$ can be isometrically embedded in $X$. 
\end{Thm}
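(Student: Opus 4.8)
The plan is to convert the coarse, large-scale near-flats provided by the hypothesis into an honest isometric copy of $V:=(\R^n,\|\cdot\|)$ inside $X$; the passage from coarse to exact control is the real content, and it will be carried by a barycentric regularization together with a metric differentiation and limit argument. As preliminary reductions I would replace $\sig$ by a reversible bicombing (harmless, since $X$ is proper) and fix a compact set $K\subset X$ meeting every orbit of $\operatorname{Isom}(X)$.

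First, unravelling the Gromov--Hausdorff convergence $\tfrac1{R_k}S_k\to B$ produces, for each $k$, a map $h_k\colon R_kB\to X$ (here $R_kB\subset V$ denotes the ball of radius $R_k$) with $\bigl|\,d(h_kv,h_kv')-\|v-v'\|\,\bigr|\le\eps_kR_k$ for some $\eps_k\downarrow0$, and after composing with an isometry of $X$ we may assume $h_k(0)\in K$. These maps are merely coarsely Lipschitz, and taking a naive limit at unit scale would annihilate the lower distance bound — indeed a flat may well appear in an asymptotic cone of $X$ without appearing in $X$ — so the bicombing has to enter in an essential way.

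Next, over a lattice in $R_kB$ of spacing comparable to $\eps_kR_k$ I would replace the restriction of $h_k$ by a canonical interpolation built from the Es-Sahib--Heinich barycenter. The conical inequality \eqref{eq:conical} — which is all that is available, the bicombing being non-convex so that the geodesic ruling used in Busemann spaces is no longer at hand — supplies a transportation-type $1$-Lipschitz estimate for the barycenter, making the interpolant $C(n)$-Lipschitz uniformly in $k$ and keeping it within $O(\eps_kR_k)$ of $h_k$, hence still a near-isometric embedding of the large ball. Convolving this interpolant against a fixed smooth mollifier by means of the barycentric (``Riemannian'') integral preserves these features and renders the resulting maps $\phi_k$ metrically differentiable with a controlled metric differential. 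An area-formula comparison — the image of $\phi_k$ on $R_kB$ carries $n$-dimensional measure at least $(1-o(1))\operatorname{vol}_n(R_kB)$, while this is bounded by the integral of the Jacobian of the metric differential, which the regularization keeps at most $(1+o(1))\|\cdot\|$ — together with the rigidity of equality forces the metric differential of $\phi_k$ to equal $\|\cdot\|$ off an $o(1)$-fraction of $R_kB$; this is the step that uses the full $n$-dimensionality of $B$ and rules out collapse. Selecting a point around which $\phi_k$ is nearly infinitesimally isometric on a ball of radius tending to infinity, recentring that ball into $K$ by an isometry of $X$ (cocompactness again), and passing to the limit via Arzel\`a--Ascoli and a diagonal argument then yields a $1$-Lipschitz, distance-preserving map $V\to X$, i.e. the desired embedding.

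The main obstacle is precisely the absence of a convex bicombing in the regularization step: one must manufacture maps that are simultaneously uniformly Lipschitz, faithful to the $h_k$ up to lower-order error, and regular enough to differentiate, with only the conical property at one's disposal. In a Busemann space this is the elementary geodesic ruling; here it is exactly what the Es-Sahib--Heinich barycenter and the convolution built from it are for, and the technical heart of the argument is verifying that this operation neither inflates the Lipschitz constant beyond $1+o(1)$ in the relevant range of scales nor spoils the approximation to $h_k$ — which is what the conical inequality \eqref{eq:conical} is tailored to guarantee. A second, more classical difficulty is keeping the lower distance bound alive through the limit, so that the limiting map is a genuine isometric embedding rather than merely $1$-Lipschitz; this is where the volume/area comparison above, and with it the $n$-dimensionality of the asymptotic subset $B$, does its work.
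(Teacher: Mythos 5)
Your preliminary reductions and regularization coincide with the paper's: reversibility via Proposition~\ref{Prop:forceReversible}, the RHA reformulation \eqref{Def:RHA}, the lattice interpolation by Es-Sahib--Heinich barycenters (Lemma~\ref{LemStep1}), and the barycentric convolution at an intermediate scale to bring the Lipschitz constant down to $1+o(1)$ (Lemma~\ref{LemStep2}). The genuine divergence, and the gap, is in your final step, where the lower distance bound is to be secured by an area-formula rigidity argument. The conclusion that rigidity can deliver --- metric differential equal to $\|\cdot\|$ off an $o(1)$-fraction of $R_kB$, i.e.\ a.e.\ infinitesimal isometry --- does not imply the macroscopic lower bound: the fold $(x_1,\dots,x_n)\mapsto(x_1,\dots,x_{n-1},|x_n|)$ is $1$-Lipschitz with metric differential equal to the norm everywhere, yet collapses distances. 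The supplementary information the area comparison could provide (multiplicity $1$ a.e.\ at scale $R_k$) does not repair this, because your limit is taken at \emph{unit} scale around $R_ka$: a fold passing near $R_ka$ produces multiplicity $\geq 2$ only on a set of unit-scale measure, hence $o(R_k^n)$, invisible to the global comparison; and a.e.\ derivative information does not survive uniform convergence, so nothing is transferred to the limit map $\Phi$. There is also a quantitative issue in your input inequality $\mathcal{H}^n(\phi_k(R_kB))\geq(1-o(1))\operatorname{vol}_n(R_kB)$: Gromov--Hausdorff proximity gives no lower bound on Hausdorff measure, and the Besicovitch-type estimates available from continuity of $\phi_k$ yield the correct order of magnitude but not the sharp constant in the normalization that the equality-rigidity step requires for a general norm (this touches the semi-ellipticity problem for Hausdorff-type volumes in normed spaces), so even the ``equality'' you want to exploit is not established.

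The paper's Step~3 is built precisely to avoid these problems: for each direction $v$ it differentiates not $\Phi_k$ but the chordwise distance function $H_k(x)=d(\Phi_k(p_k(x)),\Phi_k(x))$, shows via the fundamental theorem of calculus and Fubini that $\di_vH_k\geq c_{v,k}\to1$ on most of $B$, chooses a density point $a$ of the common good set, recenters by cocompactness and passes to the limit, and then refutes a hypothetical collapse $d(\Phi(x),\Phi(y))\leq(1-2l)\|x-y\|$ by integrating $\di_vH_k$ over a cylinder of chords: on each chord the triangle inequality bounds this integral by the distance between the images of the chord's endpoints, so the deficit spreads over a set of fixed measure near $R_ka$, contradicting the density-point property. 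This mechanism converts a.e.\ derivative bounds into macroscopic distance lower bounds for the $\Phi_k$ \emph{before} the limit and detects folding (for the fold above, $\di_vH_k=-1$ on half the ball), which the metric differential cannot. To complete your proof you would need either to adopt this device or to supply an equally effective replacement; the area-formula comparison as stated does not close the argument.
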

The original proof relies in a crucial way upon properties of Busemann functions that are no longer available in our context. (Even for such a simple case as $\R^2$ with the supremum norm, the indefinitely many (linear) rays issuing from $0\in\R^2$ produce only $8$ Busemann functions.) Therefore we take a quite different road and make use of the bicombing geodesics only to provide us with a barycenter map by means of which we define a Riemannian integral for maps with values in spaces with bicombings.
After the proof of Theorem~\ref{Thm:D}, which involves a metric differentiation argument and a convolution like operation, we finally state the analogue to Kleiner's Theorem --- Theorem~\ref{Thm:Dreally}. Among other things, this theorem shows the following. Every proper cocompact metric space $X$ with a 
bicombing contains an $n$-flat (an isometrically embedded $n$-dimensional normed space) whenever it contains an $n$-quasiflat (a quasi-isometrically embedded $n$-dimensional normed space).
The first result of that kind was shown for manifolds of non-positive curvature in \cite{AndS}.

In Section~\ref{Chapter:Barycenters} we preliminarily develop the barycenter map and then devote Section~\ref{Chapter:Asymptotic rank} to the proofs of \ref{Thm:D} and \ref{Thm:Dreally}.
But first we end this section with the prior mentioned statement about passing from a possibly non-reversible bicombing to a reversible one.

\begin{Prop}\label{Prop:forceReversible} If a proper metric space $X$ admits a (possibly non-reversible) bicombing $\sig$, then $X$ admits a reversible bicombing $\tilde\sig$.
\end{Prop}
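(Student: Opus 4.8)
The plan is to make $\sig$ reversible by symmetrizing it. For each ordered pair $(x,y)$ we have two competing constant\/-speed geodesics from $x$ to $y$, namely $\sig_{xy}$ and $t\mapsto\sig_{yx}(1-t)$, and the natural remedy is to replace both by their ``average''. To average two points of $X$ canonically I would invoke the Es\/-Sahib\/--Heinich barycenter map $\bary$ on $X$, available since $X$ is proper (hence complete): it assigns to every finite tuple (equivalently, every finitely supported probability measure) on $X$ a point of $X$, is invariant under permutations, reduces to the identity on one\/-point tuples, and is nonexpansive for the $L^1$\/-transportation distance. In particular, writing $\bary(a,b):=\bary\bigl(\tfrac12\delta_a+\tfrac12\delta_b\bigr)$, one has $\bary(a,a)=a$, $\bary(a,b)=\bary(b,a)$, and $d\bigl(\bary(a,b),\bary(a',b')\bigr)\le\tfrac12 d(a,a')+\tfrac12 d(b,b')$. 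I then set
\[
\tilde\sig_{xy}(t):=\bary\bigl(\sig_{xy}(t),\,\sig_{yx}(1-t)\bigr),\qquad x,y\in X,\ t\in[0,1].
\]

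Next I would verify that $\tilde\sig$ is a (conical, geodesic) bicombing. The endpoint conditions are clear, since $\bary(x,x)=x$ and $\bary(y,y)=y$. Using the nonexpansiveness of $\bary$ together with the fact that $\sig_{xy}$ and $\sig_{yx}$ have constant speed $d(x,y)$, one gets $d\bigl(\tilde\sig_{xy}(s),\tilde\sig_{xy}(t)\bigr)\le|s-t|\,d(x,y)$; since the endpoints of $\tilde\sig_{xy}$ are already at distance $d(x,y)$, this forces $\tilde\sig_{xy}$ to be a constant\/-speed geodesic from $x$ to $y$. For the conical inequality \eqref{eq:conical} one combines the bound for $\sig_{xy},\sig_{x'y'}$ at parameter $t$ with the bound for $\sig_{yx},\sig_{y'x'}$ at parameter $1-t$ (both instances of \eqref{eq:conical}) and applies the nonexpansiveness of $\bary$; the two right\/-hand sides coincide and one obtains exactly \eqref{eq:conical} for $\tilde\sig$. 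Continuity of $\tilde\sig$ then follows as noted after \eqref{eq:conical}.

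Reversibility is finally immediate from the permutation\/-invariance of the barycenter: $\tilde\sig_{yx}(1-t)=\bary\bigl(\sig_{yx}(1-t),\sig_{xy}(t)\bigr)=\bary\bigl(\sig_{xy}(t),\sig_{yx}(1-t)\bigr)=\tilde\sig_{xy}(t)$. The real substance of the argument --- and the only place properness is used --- is therefore the availability of the barycenter map $\bary$ on $X$ with the two properties invoked above: canonical, i.e.\ permutation\/-invariant, and nonexpansive in the transportation metric. This is exactly what the Es\/-Sahib\/--Heinich construction supplies for proper spaces with a bicombing; granting it, everything else is a routine check. (Alternatively, one could note that the set of all bicombings on $X$, with the topology of uniform convergence on bounded sets, is compact, and try to produce a fixed point of the reversal involution directly, but the barycenter route is the cleanest.)
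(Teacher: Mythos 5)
Your argument is circular at its only substantive point. The two\/-point average $\bary(a,b)$ you invoke --- symmetric, with $\bary(a,a)=a$ and $d(\bary(a,b),\bary(a',b'))\le\tfrac12 d(a,a')+\tfrac12 d(b,b')$ --- is not available from properness alone. In this paper the barycenter maps of Theorem~\ref{Thm:Bary} are constructed only for spaces with a \emph{reversible} bicombing, precisely because the base case is $\bary_2(x,y):=\sig_{xy}(1/2)$, which is permutation invariant only if $\sig_{xy}(1/2)=\sig_{yx}(1/2)$; the remark following Theorem~\ref{Thm:Bary} then cites Proposition~\ref{Prop:forceReversible} to extend the barycenters to proper spaces with a possibly non-reversible bicombing, so the logical order is the reverse of the one you use. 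The original construction of Es-Sahib and Heinich~\cite{EsSH} does not help either: it is carried out for Busemann spaces, where geodesics and hence midpoints are unique and automatically symmetric. In fact, a map with the three properties you list is exactly a symmetric midpoint assignment satisfying the conical inequality \eqref{eq:midpointconical} (note $d(a,\bary(a,b))\le\tfrac12 d(a,b)$ and $d(\bary(a,b),b)\le\tfrac12 d(a,b)$ force equality), and producing such a map from a non-symmetric one is the entire content of the proposition. The paper does this by iterating $x_i=x_{i-1}\#y_{i-1}$, $y_i=y_{i-1}\#x_{i-1}$ with $x\#y:=\sig_{xy}(1/2)$ and using properness (compactness of bounded sets) to show the two sequences converge to a common limit; this is where properness genuinely enters, and it is the step your proposal replaces by an appeal to a result that presupposes the conclusion. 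The closing aside about compactness of the space of bicombings and a fixed point of the reversal involution is only a heuristic, not an argument.

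The rest of your verification is fine and worth keeping in mind: granted a symmetric conical two\/-point average, your pointwise definition $\tilde\sig_{xy}(t):=\bary(\sig_{xy}(t),\sig_{yx}(1-t))$ does yield a reversible bicombing --- the endpoint, geodesic, and conical checks you sketch are correct, and this pointwise averaging is arguably slicker than the paper's route, which first symmetrizes midpoints and then rebuilds the bicombing by dyadic subdivision and completeness. So your proof becomes correct once you supply an independent construction of the symmetric average (for instance, the paper's iteration argument); as written, that key ingredient is missing.
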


\begin{proof} First we claim that the existence of a bicombing is equivalent to the existence of a midpoint assignment $(x,y)\mapsto x\#y$ (i.e.\ $x\#y$ is a point such that $d(x,x\#y)=d(x\#y,y)=\tfrac{1}{2}d(x,y)$) with the additional (conical) property
\begin{equation}\label{eq:midpointconical}
d(x\#y,x'\#y')\leq \frac12 d(x,x') + \frac12 d(y,y') ;
\end{equation}
and this equivalence holds for any complete space $X$ (which every proper space is). Obviously $(x,y)\mapsto\sig_{xy}(1/2)$ is a valid midpoint assignment with \eqref{eq:midpointconical} given a bicombing $\sig$. To define a bicombing from a midpoint assignment, set $\sig_{xy}(0):=x,\sig_{xy}(1):=y$, and $Q_0:=\{0,1\}$. Then for every $k\geq 1$ let $Q_k:=\{ m/2^k \,|\, m\text{ odd }, 0<m<2^k \}$, and inductively define $\sig_{xy}(t)$ at $t\in Q_k$ to be $\sig_{xy}(t-2^{-k})\#\sig_{xy}(t+2^{-k})$. See that $t-2^{-k},t+2^{-k}$ lie in $\cup_{0\leq i\leq k-1}Q_i$, and by induction it is easy to verify \eqref{eq:conical} for $t\in Q:=\cup_{k\in\N_0}Q_k$ as  well as $d(\sig_{xy}(t),\sig_{xy}(t'))=|t-t'|d(x,y)$ for all $t,t'\in Q$.
So the maps $\sig_{xy}\colon Q\to X$ defined so far are Lipschitz continuous for every pair $(x,y)$, hence extend uniquely to geodesics $\sig_{xy}\colon[0,1]\to X$ (preserving \eqref{eq:conical}) as $X$ is complete.

In view of this it suffices to show that, starting from $\#$, we can construct a symmetric midpoint assignment $\Box$, i.e.\ one with $x\,\Box\,y=y\,\Box\,x$ for all $x,y\in X$.
For a pair $(x,y)\in X^2$ we define the sequences $x_i,y_i$ by
\[ x_0=x, \; y_0 = y \;\;\text{and}\;\; x_i=x_{i-1}\#\;\!y_{i-1}, \; y_i=y_{i-1}\#\;\!x_{i-1} \;\; \text{for all } i\geq 1 .
\]
We have $d(x_{i+1},y_{i+1})\leq d(x_i,y_i)$ by applying \eqref{eq:midpointconical} to the definition of the sequences as well as
\[
d(x_i,x_j) = d(x_j,y_i)=\frac{d(x_i,y_i)}{2}=d(x_i,y_j)=d(y_j,y_i)\,\text{ for all $j > i$}.
\]
The latter results by induction over $j$ and the fact that $a,b\in M(x,y)$ implies $a\#b$, $b\#a\in M(x,y)$ where $M(x,y):=\{z\,|\,d(x,z)=d(z,y)=\tfrac12 d(x,y)\}$ denotes the  set of all midpoints between $x$ and $y$.
Consequently, if the monotone sequence $d(x_i,y_i)$ would not converge to zero, then $d(x_i,x_j)\geq \eps$ for all $i,j$ and some $\eps>0$ thereby contradicting compactness. So both sequences converge to a common limit and we set $x\,\Box\,y=\lim_{i\to\infty}x_i=\lim_{i\to\infty}y_i = y\,\Box\,x$. The verification that $\Box$ obeys \eqref{eq:midpointconical} follows when taking the limit $i\to\infty$ of either of the inequalities
\begin{align*}
d(x_i,x'_i) & \leq \frac12 d(x,x') + \frac12 d(y,y'), \\
d(y_i,y'_i) & \leq \frac12 d(x,x') + \frac12 d(y,y'),
\end{align*}
where $x'_i,y'_i$ are the sequences in the construction of $x'\Box y'$. And these inequalities are easily shown by mutual induction.
\end{proof}


\section{Barycenters}\label{Chapter:Barycenters}

Now we develop the tool that forms a crucial component in the proof of the main result. 
In~\cite{EsSH}, Es-Sahib and Heinich introduced an elegant 
barycenter construction for Busemann spaces, which was reviewed and partly 
improved in a recent paper by Navas~\cite{Nav}. The construction and proofs 
translate almost verbatim to spaces with reversible bicombings. As such spaces may lack unique midpoints, the only modification required is to set $\bary_2(x,y):=\sig_{xy}(1/2)$. After reviewing the construction we define the integration mentioned previously.

We need two more definitions before we can start. For an isometry $\gam\colon X\to X$ and a bicombing $\sig$ on $X$ we say that $\sig$ is {\em $\gam$-equivariant} if $\gam\circ[x,y]=[\gam(x),\gam(y)]$ for all $x,y\in X$. A set $C\subset X$ is called $\sig$-convex if for all $x,y\in C$ also the trace $[x,y]$ is contained in $C$. The {\em (closed) $\sig$-convex hull} of $C$ is the intersection of all closed and $\sig$-convex sets containing $C$, thus the smallest such set. Equivalently, the $\sig$-convex hull may  be written as the closure of
$\cup_{k\in\N_0} C_k$ where $C_0=C$ and $C_k = \cup_{x,y\in C_{k-1}}[x,y]$ for $k\geq 1$. From \eqref{eq:conical} we have $\diam(C_{k-1})=\diam(C_k)$. Moreover, taking the closure does not increase diameter but preserves $\sig$-convexity, so the diameters of $C$ and the $\sig$-convex hull of $C$ coincide.

\begin{Thm}\label{Thm:Bary} For every complete space $X$ with reversible bicombing $\sig$ and every $n\in\N$ there is a barycenter map $\bary_n\colon X^n\to X$ with the following properties:
\begin{enumerate}[(i)]
\item\label{Thm:Bary:inHull} $\bary_n(x_1,\ldots,x_n)$ lies in the $\sig$-convex hull of $\{x_1,\ldots,x_n\}$,
\item\label{Thm:Bary:Perm} $\bary_n$ is permutation invariant, i.e.\ for any permutation $\pi\in S_n$ we have $\bary_n(x_1,\ldots,x_n)=\bary_n(x_{\pi(1)},\ldots,x_{\pi(n)})$,
\item\label{Thm:Bary:Equi} $\gam(\bary_n(x_1,\ldots,x_n)) = \bary_n(\gam (x_1),\ldots,\gam (x_n))$ for every isometry $\gam$ of $X$ provided $\sig$ is $\gam$-equivariant,
\item \label{Thm:Bary:Cont}$d\big(\!\bary_n(x_1,\ldots,x_n),\bary_n(y_1,\ldots,y_n)\big) \leq \underset{\pi\in S_n}{\min} \frac1n\sum_{i=1}^n d(x_i,y_{\pi(i)})$.
\end{enumerate}
\end{Thm}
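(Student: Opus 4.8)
The plan is to construct $\bary_n$ inductively on $n$, starting from $\bary_1 = \mathrm{id}$ and $\bary_2(x,y) := \sig_{xy}(1/2)$, following the Es--Sahib--Heinich scheme. For the inductive step, given $\bary_{n-1}$ already constructed and satisfying (\ref{Thm:Bary:inHull})--(\ref{Thm:Bary:Cont}), I would define, for a tuple $\mathbf{x} = (x_1,\ldots,x_n) \in X^n$, an auxiliary map $F_{\mathbf{x}} \colon X \to X$ by averaging the $(n-1)$-barycenters of the sub-tuples obtained by deleting one coordinate and inserting the running point; concretely something like $F_{\mathbf{x}}(z) = \bary_n$-candidate built from $\bary_{n-1}(x_1,\ldots,\widehat{x_i},\ldots,x_n, z)$ over $i$, but since we only have a binary midpoint operation rather than an $N$-ary average, the average over the $n$ deletions must itself be realized by iterating $\bary_2$ in a permutation-symmetric way — this is exactly where the reversibility of $\sig$ (so that $\bary_2$ is symmetric) and the conical inequality are used. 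One then shows $F_{\mathbf{x}}$ is a strict contraction, or at least that the orbit of any point under $F_{\mathbf{x}}$ is Cauchy, using (\ref{eq:conical}); completeness of $X$ gives a unique fixed point, and $\bary_n(\mathbf{x})$ is defined to be that fixed point. The contraction estimate is the technical heart: from the conical property each application of $F_{\mathbf{x}}$ contracts distances to the (eventual) fixed point by a definite factor depending only on $n$, so the iteration converges geometrically.

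Once $\bary_n$ is defined this way, the four properties follow in order. Property (\ref{Thm:Bary:inHull}): since $\bary_2(x,y) = \sig_{xy}(1/2)$ lands in $[x,y]$, each $F_{\mathbf{x}}(z)$ lies in the $\sig$-convex hull of $\{x_1,\ldots,x_n,z\}$; starting the iteration at, say, $z_0 = x_1$ keeps every iterate in the $\sig$-convex hull of $\{x_1,\ldots,x_n\}$ (using the inductive hypothesis (\ref{Thm:Bary:inHull}) for $\bary_{n-1}$ and the fact that the $\sig$-convex hull is closed and $\sig$-convex), and the limit stays there. Property (\ref{Thm:Bary:Perm}): the defining equation for the fixed point is manifestly symmetric under permuting $x_1,\ldots,x_n$ provided the averaging over deletions and the iterated $\bary_2$ are set up symmetrically; since the fixed point is unique, $\bary_n(\mathbf{x})$ is permutation invariant. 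Property (\ref{Thm:Bary:Equi}): a $\gam$-equivariant isometry commutes with $\sig_{xy}(1/2)$ hence with $\bary_2$ and, by induction, with $\bary_{n-1}$, so $\gam \circ F_{\mathbf{x}} = F_{\gam(\mathbf{x})} \circ \gam$; applying $\gam$ to the fixed-point iteration and using uniqueness gives the equivariance.

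Property (\ref{Thm:Bary:Cont}), the Lipschitz estimate, is what I expect to be the main obstacle and the one requiring the most care. The idea is to compare the two fixed-point iterations for $\mathbf{x}$ and $\mathbf{y}$ simultaneously, started at matched points, and use the conical inequality together with the inductive Lipschitz bound for $\bary_{n-1}$ to show that the distance between corresponding iterates is controlled by $\frac1n \sum_i d(x_i, y_i)$ up to a contracting error term that vanishes in the limit; the $\min$ over $\pi \in S_n$ then comes for free from permutation invariance (\ref{Thm:Bary:Perm}), since we may relabel $\mathbf{y}$ by the optimal $\pi$ before running the comparison. The delicate point is that the conical property (\ref{eq:conical}) is an inequality, not the convexity one would have in a Busemann or $\CAT(0)$ space, so one cannot invoke convexity of $t \mapsto d(\sig_{xy}(t), \sig_{x'y'}(t))$; nevertheless the estimate at the single parameter $t = 1/2$ is all that $\bary_2$ ever uses, and at $t=1/2$ the conical inequality says exactly $d(\bary_2(x,y), \bary_2(x',y')) \le \frac12 d(x,x') + \frac12 d(y,y')$, which is the only midpoint estimate needed. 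So the proof goes through verbatim as in \cite{EsSH} and \cite{Nav} once this single substitution is made, and the remaining bookkeeping — tracking how the $\frac1n$ weights assemble through the iterated binary midpoints — is routine but must be done carefully to land the clean bound $\frac1n\sum_{i=1}^n d(x_i,y_{\pi(i)})$ rather than a worse constant.
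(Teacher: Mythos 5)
Your inductive step has a genuine gap at its very first move: the auxiliary map $F_{\mathbf{x}}$ is never actually defined. You want $F_{\mathbf{x}}(z)$ to be an equal-weight ``average'' of the $n$ points $\bary_{n-1}(x_1,\ldots,\widehat{x_i},\ldots,x_n,z)$ (note, incidentally, the arity mismatch: these tuples have $n$ entries, so $\bary_{n-1}$ does not even apply to them), and you propose to realize that average ``by iterating $\bary_2$ in a permutation-symmetric way''. But no such operation is available at this stage: any fixed finite pattern of iterated midpoints assigns dyadic weights to its entries, so for $n=3$ it cannot weight the points equally, and even when equal dyadic weights are possible (say $n=4$, via $\bary_2(\bary_2(\cdot,\cdot),\bary_2(\cdot,\cdot))$) the result depends on the grouping in a general nonlinear space, hence is not permutation invariant. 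Reversibility of $\sig$ only gives $\bary_2(x,y)=\bary_2(y,x)$; it does not symmetrize iterated midpoints. Producing a symmetric equal-weight average of $n$ points from the binary operation $\bary_2$ is essentially the statement being proved, so defining $F_{\mathbf{x}}$ this way is circular, and everything downstream --- the contraction estimate, uniqueness of the fixed point, and the verification of (i)--(iv) --- rests on an object that has not been constructed. (Your closing remark that the argument of Es--Sahib--Heinich/Navas goes through verbatim with $\bary_2(x,y)=\sig_{xy}(1/2)$ is correct, but the construction you describe is not theirs.)

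The paper avoids any $n$-ary averaging altogether: for $n\ge 3$ it iterates the leave-one-out map on $n$-tuples, setting $x_j^i:=\bary_{n-1}(x_1^{i-1},\ldots,\widehat{x_j^{i-1}},\ldots,x_n^{i-1})$. The inductive property (iv) for $\bary_{n-1}$ gives $d(x_k^i,x_l^i)\le\tfrac{1}{n-1}\,d(x_k^{i-1},x_l^{i-1})$, so the diameter of the tuple (equivalently, of its $\sig$-convex hull, since these coincide) shrinks by the factor $\tfrac{1}{n-1}$ at each step; the nested hulls collapse to a single point of $X$ (completeness), and that point is $\bary_n(x_1,\ldots,x_n)$. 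Property (i) holds because every iterate stays in the $\sig$-convex hull of $\{x_1,\ldots,x_n\}$, (ii) and (iii) are immediate from the manifest symmetry and equivariance of this scheme, and (iv) is obtained --- after reducing to the identity permutation via (ii), as you anticipated --- by summing the inductive estimate $d(x_j^i,y_j^i)\le\tfrac{1}{n-1}\sum_{k\ne j}d(x_k^{i-1},y_k^{i-1})$ over $j$, which shows that $\tfrac1n\sum_j d(x_j^i,y_j^i)$ is non-increasing in $i$ and converges to $d(\bary_n(\mathbf{x}),\bary_n(\mathbf{y}))$. If you wish to repair your write-up, replace the self-map $F_{\mathbf{x}}$ of $X$ by this map of $X^n$ into itself; the $\tfrac{1}{n-1}$ diameter decay plays the role of your ``definite contraction factor'', and no symmetric $n$-point average is ever needed.
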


Note that for $n=2$, $\bary_2$ as above is exactly what we call a symmetric midpoint assignment. And (in view of Proposition~\ref{Prop:forceReversible}) if the given space is proper, even a (possibly non-reversible) bicombing (or midpoint assignment) is enough to obtain these barycenters. Also observe that \eqref{Thm:Bary:inHull} implies $\bary_n(x,\ldots,x)=x$ for every $n$ and $x$ as we would expect.

\begin{proof} First, property \eqref{Thm:Bary:inHull} forces us to set $b_1(x_1)=x_1$. Second, we define $\bary_2(x_1,x_2)=\sig_{x_1x_2}(1/2)$. This definitions are in accord with all the required properties since the bicombing is assumed the be reversible. For $n\geq 3$, let $x_1^0=x_1,x_2^0=x_2,\ldots,x_n^0=x_n$ be the initial points and recursively
\[
x_j^i := \bary_{n-1}(x_1^{i-1},\ldots,\widehat{x_j^{i-1}},\ldots,x_n^{i-1}) \;\; \text{for $i\geq 1$} ,
\]
where $\widehat{x_j^{i-1}}$ means that this point is omitted. We proceed by induction over $n$ --- assuming the properties for $\bary_{n-1}$ --- and claim that $\diam(\{x_1^i,\ldots,x_n^i\})$ converges to zero for $i\to\infty$. This diameter equals the one for the $\sig$-convex hull of $\{x_1^i,\ldots,x_n^i\}$, and this set is clearly contained in the hull of $\{x_1^{i-1},\ldots,x_n^{i-1}\}$. The resulting monotonic sequence converges to a single point which we declare to be the value of $\bary_n(x_1,\ldots,x_n)$; equivalently let it be the limit of $x_j^i$ when $i\to\infty$ (the choice of $j$ is obviously irrelevant).

For the claim, observe that $d(x_k^i,x_l^i)\leq\tfrac{1}{n-1} d(x_k^{i-1},x_l^{i-1})$ from \eqref{Thm:Bary:Cont}. Hence $\diam(\{x_1^i,\ldots,x_n^i\})\leq\tfrac{1}{n-1}\diam(\{x_1^{i-1},\ldots,x_n^{i-1}\})$ and $\bary_n$ is well-defined. Since all points $x_j^i$ lie in the $\sig$-convex hull of $\{x_1,\ldots,x_n\}$, so does the $\bary_n(x_1,\ldots,x_n)$. \eqref{Thm:Bary:Perm} results from the permutation invariant nature of the construction, and since --- starting from the $x_j^i$ for $\bary_n(x_1,\ldots,x_n)$ --- $\,\gam(x_j^i)$ are the points that arise in the construction of $\bary_n(\gam(x_1),\ldots,\gam(x_n))$, we obtain \eqref{Thm:Bary:Equi} as well. Finally, \eqref{Thm:Bary:Equi} reduces \eqref{Thm:Bary:Cont} to $d\big(\!\bary_n(x_1,\ldots,x_n),\bary_n(y_1,\ldots,y_n)\big) \leq  \frac1n\sum_{i=1}^n d(x_i,y_i)$. By the inductive assumption we have
\[
d(x_j^i,y_j^i)\leq\frac{1}{n-1}\mathop{\sum_{k=1}}_{k\neq j}^n d(x_k^{i-1},y_k^{i-1})
\]
and taking the sum over $j$ thereof, we arrive at
\[
\frac{1}{n}\sum_{j=1}^n d(x_j^i,y_j^i)\leq\frac{1}{n}\sum_{j=1}^n d(x_j^{i-1},y_j^{i-1})\leq\cdots\leq \frac{1}{n}\sum_{j=1}^n d(x_j^0,y_j^0) .
\]
The leftmost term converges to the distance we are about to estimate (when $i\to\infty$) and thereby completes the proof. 
\end{proof}

It is clear from the construction of the points $x_j^i$ in the proof above that $\bary_n(x_1,\ldots,x_n)=\bary_n(x_1^1,\ldots,x_n^1)$, where $x_j^1=\bary_{n-1}(x_1,\ldots,\widehat{x_j},\ldots,x_n)$. This relation leads to the subsequent estimate we will use in Proposition~\ref{Prop:BaryCauchy}, which is the key step on route to Theorem~\ref{Thm:BaryMeasure}.

\begin{Lem}\label{Lem:Bary:Est} For the barycenters of Theorem~\ref{Thm:Bary} above and any $x\in X$ and integers $1\leq k\leq n$ we have
\[
d(x, \bary_n(x_1,\ldots,x_n))\leq \binom{n}{k}^{\!-1}\!\!\!\! \mathop{\sum_{I\subset\{1,\ldots,n\}}}_{|I|=k} d(x,\bary_k(x|_I)) .
\]
Here $\bary_k(x|_I)$ denotes the barycenter of the $k$-tuple whose entries are the $k$ (not necessarily distinct) values $x_i$ for $i\in I$ (and the tuples order is irrelevant as shown before).
\end{Lem}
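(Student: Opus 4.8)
The plan is to fix $k$ and induct on $n\ge k$. For the base case $n=k$ the sum on the right-hand side has the single term $I=\{1,\ldots,n\}$, so the asserted inequality is the trivial identity $d(x,\bary_n(x_1,\ldots,x_n))=d(x,\bary_n(x_1,\ldots,x_n))$. Hence in the inductive step I may assume $n>k$ and that the assertion holds with $n$ replaced by $n-1$ (for the same $k$).

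For the step I would start from the relation recalled just before the lemma, $\bary_n(x_1,\ldots,x_n)=\bary_n(x_1^1,\ldots,x_n^1)$ with $x_j^1:=\bary_{n-1}(x_1,\ldots,\widehat{x_j},\ldots,x_n)$. Applying property~\eqref{Thm:Bary:Cont} of Theorem~\ref{Thm:Bary} to the tuples $(x_1^1,\ldots,x_n^1)$ and $(x,\ldots,x)$, and using $\bary_n(x,\ldots,x)=x$ from~\eqref{Thm:Bary:inHull} so that the minimum over $S_n$ is attained trivially, I obtain
\[
d(x,\bary_n(x_1,\ldots,x_n))=d\big(\bary_n(x_1^1,\ldots,x_n^1),\bary_n(x,\ldots,x)\big)\le\frac1n\sum_{j=1}^n d(x,x_j^1).
\]
Then I feed each $d(x,x_j^1)=d\big(x,\bary_{n-1}(x|_{\{1,\ldots,n\}\setminus\{j\}})\big)$ into the inductive hypothesis, bounding it by $\binom{n-1}{k}^{-1}$ times the sum of $d(x,\bary_k(x|_I))$ over the $k$-subsets $I$ of $\{1,\ldots,n\}\setminus\{j\}$.

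It then remains to sum over $j$ and collect terms. Interchanging the order of summation, a fixed $k$-subset $I\subset\{1,\ldots,n\}$ appears once for each $j\notin I$, hence exactly $n-k$ times, so the resulting double sum equals $(n-k)\sum_{|I|=k}d(x,\bary_k(x|_I))$. This gives
\[
d(x,\bary_n(x_1,\ldots,x_n))\le\frac{n-k}{n}\binom{n-1}{k}^{-1}\mathop{\sum_{I\subset\{1,\ldots,n\}}}_{|I|=k}d(x,\bary_k(x|_I)),
\]
and the proof is completed by the elementary identity $\tfrac{n-k}{n}\binom{n-1}{k}^{-1}=\binom{n}{k}^{-1}$. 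The argument is essentially mechanical once the reduction $\bary_n(x_1,\ldots,x_n)=\bary_n(x_1^1,\ldots,x_n^1)$ is invoked; the only step warranting a moment's care is the double-counting — verifying that each $k$-subset $I$ arises from exactly $n-k$ of the sets $\{1,\ldots,n\}\setminus\{j\}$ — but this presents no genuine obstacle.
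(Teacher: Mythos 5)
Your proof is correct and follows essentially the same route as the paper: induction on $n$ for fixed $k$, the reduction $\bary_n(x_1,\ldots,x_n)=\bary_n(x_1^1,\ldots,x_n^1)$ combined with property (iv) and $\bary_n(x,\ldots,x)=x$, then the inductive hypothesis and the double-counting argument with the identity $\tfrac{n-k}{n}\binom{n-1}{k}^{-1}=\binom{n}{k}^{-1}$. No gaps to report.
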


\begin{proof} Fixing $k$, we apply induction over $n$. For $k=n$ the statement is trivial and our base case; hence assume $n>k$ and the statement to  hold for \mbox{$n-1\geq 1$}. By means of Theorem~\ref{Thm:Bary}\eqref{Thm:Bary:Cont} together with $x=\bary_n(x,\ldots,x)$ and the previously mentioned relation, we obtain
\begin{align*}
d(x, \bary_n(x_1,\ldots,x_n)) & \leq \frac{1}{n}\sum_{j=1}^n d(x,x_j^1) \\
& =\frac{1}{n}\sum_{j=1}^n d\big(x,\bary_{n-1}(x_1,\ldots,\widehat{x_j},\ldots,x_n)\big) .
\end{align*}
We may estimate the last expression by
\[
\frac{1}{n} \binom{n-1}{k}^{\!-1}\sum_{j=1}^n\; \mathop{\sum_{I\subset\{1,\ldots,n\}\setminus\{j\}}}_{|I|=k} d(x,\bary_k(x|_I))
\]
using the induction hypothesis.
For a fixed $I\subset\{1,\ldots,n\}, |I|=k$ the double sum counts $d(x,\bary_k(x|_I))$ exactly $n-k$ times. Since we have the identity
$
\frac{1}{n} {n-1\choose k}^{-1}(n-k) = {n\choose k}^{-1}
$
for binomial coefficients, this concludes the proof.
\end{proof}

One very natural property may not hold for the barycenters constructed so far. If for an $n$-tuple $\mathbf{x}=(x_1,\ldots,x_n)$ we write $k\cdot \mathbf{x}$ for the $kn$-tuple where every entry is repeated $k$ times, e.g. $3\cdot \mathbf{x} = (x_1,x_1,x_1,x_2,x_2,x_2,\ldots,x_n,x_n,x_n)$, then it may not be the case that
\begin{equation}\label{Eq:BaryDefect}
\bary_n(\mathbf{x})=\bary_{kn}(k\cdot \mathbf{x}) .
\end{equation}

\begin{Expl}\label{Expl:BaryDefect} Let $K$ be the tree that emerges from gluing two copies of $[0,1]$ to $[0,2]$ identifying all three zeros to become the single point $m$.
The unique geodesics in $K$ form a bicombing (in fact this is a $\CAT(0)$ space).
Now if $y,z$ are the endpoints of the  branches of length $1$ and $x$ the endpoint of the remaining branch of length $2$, then $\bary_3(x,y,z)$ is at distance $1/3$ from $m$ on the geodesic to $x$, but $\bary_6(x,x,y,y,z,z)$ lies at distance $13/45$ from $m$ on the same geodesic.
\end{Expl}

See that one can facilitate many computations of barycenters in a space $X$ with bicombing by detecting the following pattern. Whenever there is a subset $Y\subset X$ being $\sig$-convex, isometric to a convex subset of a normed vector space via $\phi\colon Y\to V$, and such that $\phi^{-1}((1-\lam)\phi(x)+\lam\phi_y)$ equals the restricted bicombing on $Y$; then the above construction yields a barycenter (on $Y$) behaving like the linear barycenter $\bary_n(x_1,\ldots,x_n):=\tfrac{1}{n}\sum_{i=1}^n x_i$ on $\phi(Y)$. Thus confronted with the task of computing $\bary_n(x_1,\ldots,x_n)$ in $X$ when $x_1,\ldots,x_n\in Y$, we may as well compute the linear barycenter of $\phi(x_1),\ldots,\phi(x_n)$ and take the preimage. This remark is of course rather trivial since a map $\phi$ as above transports one bicombing into the other, and therefore $Y$ and $\phi(Y)$ are indistinguishable from our point of view.

The fact that \eqref{Eq:BaryDefect} may not hold prevents us to define the barycenter for probability measures with, say, finite support. To address this defect, one is tempted to define
\[
\bary^*_n(\mathbf{x}):=\lim_{k\to\infty}\bary_{kn}(k\cdot \mathbf{x})
\]
and hope this limit always exists. One of the central observations of \cite{EsSH,Nav} is that this in fact works, and we present a streamlined version of a proof found in \cite{Nav}, which we were able to simplify considerably by use of elementary statistics.

\begin{Prop}\label{Prop:BaryCauchy} Still in the setting of Theorem~\ref{Thm:Bary}, let the barycenters there be given. Then for any $n$ and $n$-tuple $\mathbf{x}$ we have that $k\mapsto\bary_{nk}(k\cdot \mathbf{x})$ is a Cauchy sequence in $X$ and thus convergent.
\end{Prop}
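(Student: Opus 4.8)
The plan is to abbreviate $a_m := \bary_{nm}(m\cdot\mathbf{x})$, set $\Del := \diam\{x_1,\dots,x_n\}$, and establish the quantitative estimate
\[
d(a_m, a_{m'}) \;\le\; \frac{\Del}{\sqrt{m}} \qquad \text{for all integers } 1 \le m \le m' .
\]
Since the right-hand side tends to $0$, this shows at once that $(a_m)_{m\ge 1}$ is Cauchy, hence convergent as $X$ is complete. The guiding idea is to compare $a_{m'}$ with $a_m$ not head-on, but through the averaging provided by Lemma~\ref{Lem:Bary:Est}: a uniformly random $nm$-element sub-tuple of the $nm'$-tuple $m'\cdot\mathbf{x}$ agrees, in expectation, with $m\cdot\mathbf{x}$, and property~\eqref{Thm:Bary:Cont} of Theorem~\ref{Thm:Bary} turns this agreement into a bound on the distance of the corresponding barycenters.

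In detail, I would apply Lemma~\ref{Lem:Bary:Est} to the $nm'$-tuple $m'\cdot\mathbf{x}$ with subset-size parameter $nm$ and reference point $x = a_m$, obtaining
\[
d\big(a_m, a_{m'}\big) \;\le\; \binom{nm'}{nm}^{\!-1} \mathop{\sum_{I\subset\{1,\dots,nm'\}}}_{|I|=nm} d\big(a_m,\, \bary_{nm}((m'\cdot\mathbf{x})|_I)\big) ,
\]
that is, $d(a_m, a_{m'})$ is at most the average of $d(a_m, \bary_{nm}((m'\cdot\mathbf{x})|_I))$ over a uniformly random $nm$-subset $I$. Fix such an $I$ and let $c_i = c_i(I)$ be the number of indices of $I$ lying in the $i$-th block of $m'\cdot\mathbf{x}$ (the block consisting of $m'$ copies of $x_i$), so that $\sum_{i=1}^n c_i = nm$. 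Writing $a_m = \bary_{nm}(m\cdot\mathbf{x})$ and invoking Theorem~\ref{Thm:Bary}\eqref{Thm:Bary:Cont}, the distance $d(a_m, \bary_{nm}((m'\cdot\mathbf{x})|_I))$ is at most the minimal matching cost between the two $nm$-tuples $m\cdot\mathbf{x}$ and $(m'\cdot\mathbf{x})|_I$; matching $\min(m, c_i)$ of the copies of each $x_i$ with one another for free and pairing up the remaining entries arbitrarily (each such pair costing at most $\Del$) yields
\[
d\big(a_m, \bary_{nm}((m'\cdot\mathbf{x})|_I)\big) \;\le\; \frac{\Del}{nm} \sum_{i=1}^n (m - c_i)^+ .
\]

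It remains to control the average over $I$ of this right-hand side, which is where the ``elementary statistics'' enters. Under the uniform choice of $I$, each $c_i$ is a sum of $m'$ exchangeable indicator variables, each Bernoulli with parameter $nm/(nm') = m/m'$, which are pairwise negatively correlated since $I$ is sampled without replacement; hence $\mathbb{E}[c_i] = m$ and $\operatorname{Var}(c_i) \le m'\cdot\tfrac{m}{m'}\big(1-\tfrac{m}{m'}\big) \le m$. By Jensen's inequality (concavity of the square root) this gives $\mathbb{E}[(m - c_i)^+] \le \mathbb{E}\,|m - c_i| \le \sqrt{\operatorname{Var}(c_i)} \le \sqrt{m}$, and averaging the last display over $I$ produces $d(a_m, a_{m'}) \le \tfrac{\Del}{nm}\cdot n\sqrt{m} = \Del/\sqrt{m}$, as desired. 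I expect the only genuinely delicate point to be the middle step: setting up the right instance of Lemma~\ref{Lem:Bary:Est} and recognizing that the leftover matching cost is governed by the subsampling fluctuations of the occupation counts $c_i$. Once this is in place, the matching estimate and the variance bound are entirely routine.
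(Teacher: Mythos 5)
Your proposal is correct and follows essentially the same route as the paper's proof: apply Lemma~\ref{Lem:Bary:Est} with reference point $\bary_{nm}(m\cdot\mathbf{x})$, bound each term via the matching estimate of Theorem~\ref{Thm:Bary}\eqref{Thm:Bary:Cont} in terms of the occupation counts of a random $nm$-subset, and control their expected deviation from the mean by a variance bound plus Jensen, yielding a Cauchy estimate of order $\diam/\sqrt{m}$ uniform in $m'$. The only difference is cosmetic: you bound the variance via negative correlation of the sampling indicators instead of summing out coordinates with Vandermonde's identity to exhibit the univariate hypergeometric distribution, which is the same estimate in slightly different clothing.
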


\begin{proof} Let $n,k$ be positive integers and $\mathbf{x}\in X^n$ an $n$-tuple. We want to estimate $d\big(\bary_{kn}(k\cdot \mathbf{x}),\bary_{(k+l)n}((k+l)\cdot \mathbf{x})\big)$ for arbitrary $l\geq 1$; by Lemma~\ref{Lem:Bary:Est} this is less or equal to
\[
\binom{(k+l)n}{kn}^{-1}\!\!\!\! \mathop{\sum_{I\subset\{1,\ldots,(k+l)n\}}}_{|I|=kn} d\!\left(\bary_{kn}(k\cdot \mathbf{x}),\,\bary_{kn}\!\left(\big[(k+l)\cdot \mathbf{x}\big]_I\right)\!\right) .
\]
For an $I$, over which the above sum runs, let $i_1,\ldots,i_n$ be defined as follows: $i_j$ counts how many times $x_j$ appears in $[(k+l)\cdot \mathbf{x}]_I$. So $[(k+l)\cdot \mathbf{x}]_I$ starts with $i_1$ copies of $x_1$ followed by $i_2$ copies of $x_2$ and so on. Now the continuity property \ref{Thm:Bary}\eqref{Thm:Bary:Cont} of the barycenters leads to
\begin{align*}
d\!&\left(\bary_{kn}(k\cdot \mathbf{x}),\,\bary_{kn}\!\left(\big[(k+l)\cdot \mathbf{x}\big]_I\right)\!\right) \\
& \quad\quad \leq \frac{D}{2kn}\left(|i_1-k|+|i_2-k|+\cdots+|i_n-k| \right) .
\end{align*}
For every specific assignment of the $i_j$ there are exactly
$
{k+l\choose i_1}{k+l\choose i_2}\cdots{k+l\choose i_n}
$
sets $I$ that produce these values. Consequently our upper bound now reads
\[
\mathop{\sum_{0\leq i_1,\ldots,i_n\leq k+l}}_{i_1+\cdots+i_n=kn} \frac{{k+l\choose i_1}\cdots{k+l\choose i_n}}{\binom{(k+l)n}{kn}} \frac{D}{2kn}\left(|i_1-k|+|i_2-k|+\cdots+|i_n-k| \right) .\,\footnote{This is (essentially) the first absolute centric moment of a multivariate hypergeometric distribution, and a simplified expression for this exists which would end the proof here. Since that expression is rather hard to find in the literature, we choose not to use it.}
\]
By symmetry of this expression we may replace $\sum_{j=1}^n |i_j-k|$ by $n|i_1-k|$. Running the sum over $i_2,\ldots,i_n$ and exploiting Vandermonde's identity we simplify further to
\[
\frac{D}{2k}\sum_{i=0}^{k+l} \frac{{k+l\choose i}{(k+l)(n-1)\choose kn-i}}{\binom{(k+l)n}{kn}} |i-k| .
\]
The fraction of binomial coefficients is the probability mass function of a hypergeometric distribution with parameters $kn$ (draws without replacement), $(k+l)n$ (balls in the urn) and $k+l$ (balls that count). Let $Y$ be a random variable distributed accordingly; the mean value is ${\rm E}[Y]=k$ and the variance may be estimated  by ${\rm Var}[Y]\leq k$. The above expression is therefore equal to $\tfrac{D}{2k}{\rm E}[|Y-{\rm E}[Y]|]$ and by Jensen's inequality
\[
\frac{D}{2k}{\rm E}[|Y-{\rm E}[Y]|]\leq \frac{D}{2k}\sqrt{{\rm E}[(Y-{\rm E}[Y])^2]} \leq \frac{D}{2\sqrt{k}}.
\]
So we have a bound for $d\big(\bary_{kn}(k\cdot \mathbf{x}),\bary_{(k+l)n}((k+l)\cdot \mathbf{x})\big)$ independent of $l$ and going to zero for $k\to\infty$ as sought.
\end{proof}

Now we have everything in place to define barycenters for probability measures with finite support (at this point the barycenters could be easily defined for a much wider class of measures, see again \cite{EsSH,Nav}; we skip this since we have no use for it in the sequel). Every such measure $\mu$ on a metric space $X$ can be written uniquely as a sum $\sum_{i=1}^k a_i\del_{x_i}$ of Dirac measures (point measures) where $x_1,\ldots,x_k$ are pairwise distinct points and $a_1,\ldots,a_k> 0$, $a_1+\cdots+a_k=1$. Given another measure $\nu=\sum_{j=1}^l b_j\del_{y_j}$, a {\em mass transport from $\mu$ to $\nu$} shall be a function $T\colon\{1,\ldots,k\}\times\{1,\ldots,l\}\to[0,\infty)$ provided $\sum_{i=1}^kT(i,j)=b_j$ for all $j$ and $\sum_{j=1}^lT(i,j)=a_i$ for all $i$. By the {\em cost} $\,c(T)$ of a mass transport we understand the real value $\sum_{i,j} T(i,j)d(x_i,y_j)$.
We define the Wasserstein distance $d_W(\mu,\nu)$ to be the infimum over the costs of all mass transports from $\mu$ to $\nu$, and it is not hard to verify that this is indeed a metric and that the infimum is always attained for some transport $T$ which we call an {\em optimal transport}.
(There are of course much more sophisticated definitions of this distance but this one is enough for our purpose and relatively easy to handle.) Note that the bound in estimate \ref{Thm:Bary}\eqref{Thm:Bary:Cont} is exactly the Wasserstein distance for $\mu=\sum_{i=1}^n \tfrac{1}{n}\del_{x_i}$ and $\nu=\sum_{j=1}^n \tfrac{1}{n}\del_{y_j}$. To see this, fix an optimal transport $T$ and consider the bipartite graph on the disjoint union $\{1,\ldots,n\}\dot\cup\{1,\ldots,n\}$ where $i$ in the first set is connected to $j$ in the second if $T(i,j)>0$. This graph fulfills the premises of Hall's marriage theorem, and hence there is a permutation $\pi$ such that $T(i,\pi(i))>0$ for every $i$. If we define the transport $\tilde T$ to be equal to $T$ except for the points $(i,\pi(i))$ where we subtract $D:=\min_{i=1,\ldots,n}T(i,\pi(i))$, we obtain
\[
c(T)=c(\tilde T)+\sum_{i=1}^n Dd(x_i,y_{\pi(i)}) .
\]
$\tilde T$ is an (optimal) transport from $(1-nD)\mu$ to $(1-nD)\nu$ so $c(\tilde T)=(1-nD)c(T)$ and therefore the distances coincide.

\begin{Thm}\label{Thm:BaryMeasure} For every complete space $X$ with reversible bicombing there is a barycenter map $\bary$ assigning each probability measure $\mu$ with finite support $|\operatorname{spt}(\mu)|<\infty$ a barycenter $\bary(\mu)\in X$. This map has the properties
\begin{enumerate}[(i)]
\item\label{Thm:BaryMeasure:inHull} $\bary(\mu)$ lies in the $\sig$-convex hull of $\operatorname{spt}(\mu)$,
\item\label{Thm:BaryMeasure:Equi} $\gam(\bary(\mu)) = \bary(\gam_*\mu)$ for every isometry $\gam$ of $X$ provided $\sig$ is $\gam$-equivariant (where $\gam_*\mu:=\sum_{i=1}^l a_i\del_{\gam(x_i)}$ denotes the push-forward of $\mu=\sum_{i=1}^l a_i\del_{x_i}$),
\item\label{Thm:BaryMeasure:Cont} $d(\bary(\mu),\bary(\nu))\leq d_W(\mu,\nu)$.
\newcounter{Thm:BaryMeasure:counter}
\setcounter{Thm:BaryMeasure:counter}{\value{enumi}}
\end{enumerate}
\end{Thm}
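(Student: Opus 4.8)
The plan is to define $\bary$ first on measures with \emph{rational} weights, using the limit construction of Proposition~\ref{Prop:BaryCauchy}, and then to extend to arbitrary finitely supported measures by $1$-Lipschitz continuity. So let $\mu=\sum_{i=1}^k a_i\del_{x_i}$ with $a_i\in\Q$, pick $N\in\N$ with $m_i:=Na_i\in\N_0$ for all $i$, and let $\mathbf{x}=\mathbf{x}(\mu,N)$ be an $N$-tuple in which $x_i$ appears exactly $m_i$ times, so that $\mu=\tfrac1N\sum_{p=1}^N\del_{\mathbf{x}_p}$. I would set
\[
\bary(\mu):=\bary^*_N(\mathbf{x})=\lim_{j\to\infty}\bary_{Nj}(j\cdot\mathbf{x}),
\]
the limit existing by Proposition~\ref{Prop:BaryCauchy} and completeness of $X$. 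The first thing to verify is that this depends neither on $N$ nor on the ordering of $\mathbf{x}$: the latter is immediate from permutation invariance (Theorem~\ref{Thm:Bary}\eqref{Thm:Bary:Perm}), and for the former one checks the identity $\bary^*_N(\mathbf{x})=\bary^*_{Nk}(k\cdot\mathbf{x})$ by restricting the defining limit to the subsequence $j\in k\N$; since any two multiset representations of a rational $\mu$ are scalings of a common reduced one, well-definedness follows. I expect this to be the only genuinely delicate point: it is precisely here that the limit $\bary^*$ rather than the plain $\bary_n$ must be used — the defect \eqref{Eq:BaryDefect} exhibited in Example~\ref{Expl:BaryDefect} — so although the remaining work is essentially bookkeeping, this step requires care.

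Next I would establish \eqref{Thm:BaryMeasure:inHull}--\eqref{Thm:BaryMeasure:Cont} for rational measures by passing to the limit in the corresponding parts of Theorem~\ref{Thm:Bary}. For \eqref{Thm:BaryMeasure:inHull}: every $\bary_{Nj}(j\cdot\mathbf{x})$ lies in the $\sig$-convex hull of $\operatorname{spt}(\mu)$ by Theorem~\ref{Thm:Bary}\eqref{Thm:Bary:inHull}, and this hull is closed, hence contains the limit. For \eqref{Thm:BaryMeasure:Equi}: Theorem~\ref{Thm:Bary}\eqref{Thm:Bary:Equi} gives $\gam\big(\bary_{Nj}(j\cdot\mathbf{x})\big)=\bary_{Nj}\big(j\cdot\gam(\mathbf{x})\big)$ for $\gam$-equivariant $\gam$, and since $\gam$ is continuous and $j\cdot\gam(\mathbf{x})$ represents $\gam_*\mu$, taking $j\to\infty$ yields $\gam(\bary(\mu))=\bary(\gam_*\mu)$. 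For \eqref{Thm:BaryMeasure:Cont}, given rational $\mu,\nu$ pick a common $N$ and representing $N$-tuples $\mathbf{x},\mathbf{y}$; Theorem~\ref{Thm:Bary}\eqref{Thm:Bary:Cont} bounds $d\big(\bary_{Nj}(j\cdot\mathbf{x}),\bary_{Nj}(j\cdot\mathbf{y})\big)$ by $\min_{\pi}\tfrac1{Nj}\sum_p d\big((j\cdot\mathbf{x})_p,(j\cdot\mathbf{y})_{\pi(p)}\big)$, which by the observation preceding this theorem equals $d_W(\mu,\nu)$ — the uniform measures on $j\cdot\mathbf{x}$ and $j\cdot\mathbf{y}$ being $\mu$ and $\nu$ — so letting $j\to\infty$ gives $d(\bary(\mu),\bary(\nu))\le d_W(\mu,\nu)$.

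Finally I would extend $\bary$ to all finitely supported probability measures. Given $\mu=\sum_{i=1}^k a_i\del_{x_i}$, choose rational weights $a_i^{(m)}\ge 0$ with $\sum_i a_i^{(m)}=1$ and $a_i^{(m)}\to a_i$; an explicit mass transport that leaves the common mass fixed shows $d_W\big(\sum_i a_i^{(m)}\del_{x_i},\mu\big)\le\diam(\operatorname{spt}(\mu))\sum_i|a_i^{(m)}-a_i|\to0$, so the rational-weight measures are $d_W$-dense among finitely supported ones. Since $\bary$ is $1$-Lipschitz on them and $X$ is complete, it extends uniquely to a $1$-Lipschitz map on the whole metric space of finitely supported probability measures, which is exactly \eqref{Thm:BaryMeasure:Cont}. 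The other two properties survive the passage to the limit: for \eqref{Thm:BaryMeasure:Equi}, $\gam_*$ is a $d_W$-isometry and $\gam$ is continuous, so $\gam(\bary(\mu_m))=\bary(\gam_*\mu_m)$ passes to $\gam(\bary(\mu))=\bary(\gam_*\mu)$; for \eqref{Thm:BaryMeasure:inHull}, taking the approximants $\mu_m$ supported within $\operatorname{spt}(\mu)$ keeps every $\bary(\mu_m)$ in the closed $\sig$-convex hull of $\operatorname{spt}(\mu)$, hence so is the limit $\bary(\mu)$.
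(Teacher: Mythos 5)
Your proposal is correct and follows essentially the same route as the paper: define $\bary^*$ via Proposition~\ref{Prop:BaryCauchy}, use the stabilization identity $\bary^*_N(\mathbf{x})=\bary^*_{Nk}(k\cdot\mathbf{x})$ to get a well-defined barycenter for rational-weight measures, transfer properties \eqref{Thm:BaryMeasure:inHull}--\eqref{Thm:BaryMeasure:Cont} from Theorem~\ref{Thm:Bary} (using the min-over-permutations/Wasserstein identification for \eqref{Thm:BaryMeasure:Cont}), and extend by $d_W$-density and $1$-Lipschitz continuity. Your subsequence argument for well-definedness simply spells out a step the paper declares clear, so there is no substantive difference.
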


\begin{proof} First let $\bary_n$ be the maps provided by Theorem~\ref{Thm:Bary} and define $\bary_n^*$ by virtue of the previous proposition:
\[
\bary_n^*(x_1,\ldots,x_n):=\lim_{k\to\infty} \bary_{kn}(k\cdot \mathbf{x}),\;\;\text{where }\, \mathbf{x}=(x_1,\ldots,x_n).
\]
It is straightforward to verify properties \eqref{Thm:Bary:inHull} through \eqref{Thm:Bary:Cont} of \ref{Thm:Bary} for all $\bary_n^*$. Moreover, now clearly $\bary_n^*(x_1,\ldots,x_n)=\bary_{kn}^*(k\cdot \mathbf{x})$ for every $k\geq 1$. Because of this equality, we may now unambiguously define $\bary(\mu)$ provided the coefficients $a_i$ of $\mu=\sum_{i=1}^l a_i\del_{x_i}$ all lie in $\Q$. Let $m$ be a positive integer such that $ma_i\in\N$ for all $i$, then $\bary(\mu):=\bary_m(x)$ where $x$ is a tuple starting with $ma_1$ copies of $x_1$, then $ma_2$ copies of $x_2$, et cetera. \eqref{Thm:BaryMeasure:inHull} and \eqref{Thm:BaryMeasure:Equi} are clear from this definition and \eqref{Thm:BaryMeasure:Cont} follows from the remark preceding this proof. Finally, measures with rational coefficients are dense among all probability measures with finite support\footnote{$d_W(\sum_{i=1}^la_i\delta_{x_i},\sum_{i=1}^lb_i\delta_{x_i})\leq\tfrac{1}{2}\diam(\{x_1,\ldots,x_l\})\sum_{i=1}^l|a_i-b_i|$} and the properties for the uniquely extended map are once again not hard to verify.
\end{proof}

We now turn to the definition of the Riemannian integral for maps with values in spaces with reversible bicombings. This is a preparation for the second step of the proof of Theorem~\ref{Thm:D}. Let $f\colon M\to X$ be a map from a compact metric space with Borel probability measure $\mu$ to a complete space with reversible bicombing. We call $f$ Riemann integrable if it is bounded and continuous outside a set $D$ of points of discontinuity for which we have $\mu(\overline{\!D})=0$ (equivalently $\mu(U_\delta(D))\to 0$, where $U_\delta$ stands for the open $\delta$-neighborhood). For a finite Borel partition $A_i,i=1,\ldots,k$ of $M$ (that is $\cup_{i=1}^k A_i=M$ and $A_i\cap A_j = \emptyset$ whenever $i\neq j$) and a selection of tagged points $a_i\in A_i$, we define the Riemann sum to be
\[
\bary\!\left(\sum_{i=1}^k\mu(A_i)\delta_{f(a_i)}\right) .
\]
The mesh of a partition is defined to be $\max_{i=1,\ldots,k} \diam(A_i)$. We claim that for every $\eps$ there is a $\delta$ such that if the mesh of two partitions $A_i,B_j$ does not exceed $\delta$, then their Riemann sums are not more than $\eps$ apart. If $K$ is the diameter of the image of $f$, take $\delta$ small enough to ensure $\mu(U_\delta(D))K\leq\eps/4$ and (from uniform continuity on compact subsets) $d(f(x),f(y))\leq\eps/4$ for all $x,y\in M\setminus U_\delta(D)$ with $d(x,y)\leq\delta$. Let $C_{ij}:=A_i\cap B_j$ (dropping empty sets) be the common refinement of the two partitions and subdivide it further into $C_{ij}\cap U_\delta(D)$ and $C_{ij}\setminus U_\delta(D)$. Now a short computation shows that the Riemann sum of the last partition has distance less or equal $\eps/2$ to both, the Riemann sum of $A_i$ and $B_j$. This gives the desired bound. Hence we may define the integral (or barycenter) of $f$ as the limit of Riemann sums when the mesh of the partitions becomes zero (and such exist as $M$ is compact). Furthermore, it is now straightforward to verify the formula
\begin{equation}\label{Eq:IntEst}
d\!\left( \int_M f \,d\mu , \int_M g \,d\mu \right) \leq \int_M d(f(t),g(t)) \,d\mu(t) .
\end{equation}


\section{Asymptotic rank}\label{Chapter:Asymptotic rank}

Now we turn to the main goals, the proofs of Theorem~\ref{Thm:D} and \ref{Thm:Dreally} mentioned in the introduction. We restate the former theorem for convenience and refer to \cite{BriH} for the definition of Gromov-Hausdorff topology (alternatively one may take \eqref{Def:RHA} below for the definition). The proof of \ref{Thm:D} will be realized in three steps; Lemma~\ref{LemStep1}--\ref{LemStep3}.

\begin{ThmCustomNumber}[1.1] Let $X$ be a proper metric space with a bicombing $\sig$ and
cocompact isometry group.
Suppose there are sequences $R_k\in(0,\infty)$, $S_k\subset X$, and a normed vector space $(\R^n,\|\cdot\|)$, so that $R_k\to\infty$, and $\frac{1}{R_k}S_k$ converges to the unit ball $B\subset(\R^n,\|\cdot\|)$ in the Gromov\/-Hausdorff topology. Then $(\R^n,\|\cdot\|)$ can be isometrically embedded in $X$. 
\end{ThmCustomNumber}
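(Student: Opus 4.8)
The plan is to reconstruct a copy of $(\R^n,\|\cdot\|)$ inside $X$ by a limiting/averaging argument built on the Riemannian integral of Section~\ref{Chapter:Barycenters}. Write $B$ for the unit ball of $(\R^n,\|\cdot\|)$. The hypothesis gives, for each $k$, a $\del_k$-approximation (in the Gromov--Hausdorff sense) between $\tfrac1{R_k}S_k\subset X$ and $B$, with $\del_k\to 0$; equivalently, rescaling by $R_k$, there are maps $\psi_k\colon B\to X$ (defined first on a fine net of $B$ and extended, say, via the bicombing or just as $\eps$-isometries on the net) with $|\,d(\psi_k(u),\psi_k(v)) - R_k\|u-v\|\,|\le R_k\eps_k$ for $u,v$ in the net, $\eps_k\to0$. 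Pick basepoints and translate by the isometry group (using cocompactness) so that $\psi_k(0)$ stays in a fixed compact set; after passing to a subsequence we may assume $\psi_k(0)\to o$.

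\emph{Step 1: a coarse Lipschitz map from $\R^n$ to $X$.} From the $\psi_k$ I would produce, for each fixed scale, an $(L,C)$-quasi-isometric embedding $\R^n\to X$; more precisely the rescaled maps $f_k:=\psi_k(\,\cdot/R_k)\colon R_kB\to X$ are $(1,R_k\eps_k)$-quasi-isometries onto their images, and these images all lie (after the translation above) within bounded distance of a fixed point. This is essentially the content of Lemma~\ref{LemStep1} to come. The conical bicombing lets one fill in values on all of $R_kB$, not just the net, keeping the additive error controlled via \eqref{eq:conical}.

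\emph{Step 2: a metric differentiation / blow-up argument.} Here is where the real work lies, and I expect this to be the main obstacle. One wants to pass from the family $f_k$ of almost-isometric embeddings of larger and larger balls to an \emph{exact} isometric embedding of all of $(\R^n,\|\cdot\|)$. Naively taking a pointwise limit fails (the $f_k$ need not converge), so instead I would: (a) use the cocompact isometry group to replace $f_k$ by suitable translates whose restrictions to a fixed ball sub-converge (Arzel\`a--Ascoli, properness of $X$); (b) extract a limit map $f\colon\R^n\to X$ which is $1$-Lipschitz with respect to $\|\cdot\|$ and satisfies $d(f(u),f(v))\le\|u-v\|$, but a priori only a coarse lower bound. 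The subtlety is upgrading the lower distance bound to equality. The idea is a differentiation argument at generic points/directions: along a fixed geodesic segment $t\mapsto f(u+tw)$ in $X$, conicality forces the distance function $t\mapsto d(f(u),f(u+tw))$ to be subadditive and $\le t\|w\|$, while the large-scale almost-isometry of the $f_k$ forces it to have the correct asymptotic slope $\|w\|$; subadditivity then pins it down to exactly $t\|w\|$ for all $t$. Doing this simultaneously for all pairs, and assembling directions, yields $d(f(u),f(v))=\|u-v\|$ for all $u,v$.

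\emph{Step 3: a convolution/averaging to get genuine bicombing geodesics and kill the error terms.} This is the role of the Riemannian integral \eqref{Eq:IntEst}. Rather than take a raw limit, I would smooth each $f_k$ by integrating it against a probability kernel on small balls of $\R^n$ (a ``mollification'' $\bary$ of $f_k$ over $\eps$-balls), using $\bary$ from Theorem~\ref{Thm:BaryMeasure}. Property \eqref{Eq:IntEst} shows this averaging is $1$-Lipschitz and does not increase distances, and it tends to straighten the map: the averaged map sends affine segments in $\R^n$ close to bicombing geodesics in $X$, with errors controlled by the mesh and by $\eps_k$. Passing to the limit over $k$ (again using cocompactness to recenter and Arzel\`a--Ascoli to extract a convergent subsequence) produces the desired map $\Phi\colon\R^n\to X$; combining the non-expanding property of the averaging with the large-scale lower bound from Step~1, and the differentiation argument of Step~2, forces $\Phi$ to be an isometric embedding, i.e.\ an $n$-flat. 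The three technical lemmas \ref{LemStep1}--\ref{LemStep3} presumably correspond exactly to these three stages, with Step~2 (the metric differentiation and the passage from ``coarsely isometric on all scales'' to ``isometric'') being where the argument is most delicate.
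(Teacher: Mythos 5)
Your outline (regularize the RHA, smooth by barycentric integration, recenter by the cocompact group and take an Arzel\`a--Ascoli limit, then a differentiation argument for the lower bound) matches the paper's three-step structure, but the two steps that carry all the difficulty are not correct as you describe them. First, the assertion that the mollification is $1$-Lipschitz ``by \eqref{Eq:IntEst}'' is false: averaging an $L$-Lipschitz map over balls is, by \eqref{Eq:IntEst} alone, still only $L$-Lipschitz. The actual content of Lemma~\ref{LemStep2} is quantitative and uses the RHA property at an intermediate scale: one chooses $R_k\eps_k\prec\mu_k\prec\lam_k\prec R_k$, compares $\Psi_k(x)$ and $\Psi_k(x+sv)$ via a measure-preserving transport $\phi_k$ between the two balls of radius $\lam_k$, and runs a Fubini case analysis on chord lengths to show that the $L$-Lipschitz bound is only invoked on a portion of the ball whose measure is $o(\mathcal{L}^n(B(\lam_k)))$, giving a $(1+\del_k)$-Lipschitz bound, which is then pushed to exactly $1$ by a contraction along $\sig$ toward $\Psi_k(0)$. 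None of this is recoverable from the non-expansion property of the integral alone, and the exact bound $h_{v,k}\le 1$ is needed later, so this gap propagates.

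Second, your mechanism for the lower bound does not work. The function $t\mapsto d(f(u),f(u+tw))$ is not subadditive --- conicality \eqref{eq:conical} constrains the bicombing geodesics of $X$, not the images of affine segments under $f$ --- and, more seriously, the limit map $f$ inherits no large-scale information from the $f_k$: the additive error $R_k\eps_k$ of the RHA may tend to infinity, so on a fixed compact set around your chosen basepoint the maps can collapse entirely. This is exactly the example preceding Lemma~\ref{LemStep3}: $\Phi_k(x)=\max\{\|x\|-\sqrt{k},0\}\,x/\|x\|$ is a $1$-Lipschitz RHA whose translates recentred at the image of $0$ (your choice, via $\psi_k(0)$) converge to a constant map. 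The missing idea is the selection of a good basepoint: one differentiates the distance functions $H_k(x)=d(\Phi_k(p_k(x)),\Phi_k(x))$ in a countable dense set of directions, shows the rescaled derivatives $h_{v,k}$ are $\ge c_{v,k}\to 1$ on a set $A\subset B$ of positive measure, picks a Lebesgue density point $a\in A$, and recentres the maps at $R_k a$ before extracting the limit; the density property is then what contradicts any strict contraction $d(\Phi(x),\Phi(y))\le(1-2l)\|x-y\|$. Without this density-point selection your Step~2 establishes only the upper bound $d(f(u),f(v))\le\|u-v\|$, and the proof does not close.
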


Observe that we may assume $\sig$ to be reversible by Proposition~\ref{Prop:forceReversible} and can therefore utilize the barycenter maps. 
For this section, $\|\cdot\|$ always refers to the norm given above and $B(r)$ denotes the norm-ball of radius $r$ center at $0\in\R^n$. The assumptions of the theorem may be restated as follows. There are maps $\Phi_k\colon B(R_k)\to X$ such that
\begin{equation}\label{Def:RHA}
\big| d(\Phi_k(x),\Phi_k(y))-\|x-y\|\big| \leq R_k\eps_k
\end{equation}
for a sequence $R_k$ as above and a null sequence $\eps_k\to 0$. For brevity, we call such a sequence $\Phi_k$ an {\em RHA}; a relative Hausdorff approximation. A priori the maps of an RHA need not be continuous which leads to the first step of the proof of Theorem~\ref{Thm:D}.

\begin{Lem}[Step 1]\label{LemStep1} Given an RHA for a space $X$ with reversible bicombing (and thus a barycenter by means of Theorem~\ref{Thm:Bary}), one may construct an RHA whose maps are all $L$-Lipschitz continuous. More generally, one may require $X$ to be merely Lipschitz $n-1$-connected and the lemma still holds. 
\end{Lem}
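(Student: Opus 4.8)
The plan is to replace each (possibly wildly discontinuous) map $\Phi_k\colon B(R_k)\to X$ of the given RHA by a Lipschitz map obtained by averaging $\Phi_k$ against a smooth bump, using the barycenter integral of Section~\ref{Chapter:Barycenters}. Concretely, fix once and for all a compactly supported probability density $\rho$ on $\R^n$, say $\rho$ supported on the Euclidean unit ball and bounded, and for a scale parameter $r>0$ set $\rho_r(v):=r^{-n}\rho(v/r)$. For $x\in B(R_k-r)$ define
\[
\tilde\Phi_k(x):=\int_{B_{\mathrm{eucl}}(0,r)} \Phi_k(x+v)\,\rho_r(v)\,dv,
\]
the integral being the one from \eqref{Eq:IntEst} (the integrand is bounded measurable, and one may arrange Riemann integrability either by noting that a Hausdorff approximation can be taken Borel, or by first replacing $\Phi_k$ by a Borel map at bounded distance). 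The point is that convolution smooths: two nearby centers $x,x'$ yield integrals over densities $\rho_r(\cdot-x)$ and $\rho_r(\cdot-x')$ that are $L^1$-close, of order $\|x-x'\|/r$, so by \eqref{Eq:IntEst} combined with the Wasserstein/transport bound \ref{Thm:Bary}\eqref{Thm:Bary:Cont} we get $d(\tilde\Phi_k(x),\tilde\Phi_k(x'))\le (\mathrm{const}/r)\,\|x-x'\|$; choosing $r$ a fixed multiple of $R_k$ (independent of $k$), or simply $r=1$, this gives a uniform Lipschitz constant $L$.

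Next one checks that $\tilde\Phi_k$ is still an RHA, i.e.\ satisfies \eqref{Def:RHA} with a null sequence $\eps_k'\to0$. This is where \eqref{Eq:IntEst} does the real work: for fixed $x,y$, couple the two integrals by the diagonal transport $v\mapsto v$, so that
\[
d\big(\tilde\Phi_k(x),\tilde\Phi_k(y)\big)\le \int d\big(\Phi_k(x+v),\Phi_k(y+v)\big)\,\rho_r(v)\,dv
\le \|x-y\|+R_k\eps_k,
\]
and symmetrically for a lower bound after comparing $\Phi_k(x+v)$ with $\Phi_k(y+v)$ to the Euclidean distances $\|(x+v)-(y+v)\|=\|x-y\|$. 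Hence $\tilde\Phi_k$ inherits \eqref{Def:RHA} with the same $\eps_k$ (up to the harmless loss of shrinking the domain from $B(R_k)$ to $B(R_k-r)$, which is absorbed by passing to a slightly smaller radius $R_k':=R_k-r$ that still tends to infinity, or by rescaling). Thus $\tilde\Phi_k$ is an $L$-Lipschitz RHA, proving the first assertion.

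For the generalization, when $X$ is only Lipschitz $(n-1)$-connected the barycenter integral is unavailable, and instead one proceeds by the standard skeleton-by-skeleton extension. Triangulate $B(R_k)$ (or rather $\R^n$) by a fixed-size simplicial complex, scaled so that simplices have diameter $\sim 1$; on the vertices put $\Psi_k:=\Phi_k$; then extend over the $1$-skeleton, the $2$-skeleton, $\dots$, up to the $n$-skeleton, using at each stage the Lipschitz $(i-1)$-connectedness of $X$ to fill the image of the boundary sphere of each $i$-simplex (of controlled size, since consecutive vertices have $\Phi_k$-images at distance $\lesssim 1 + R_k\eps_k$, which is $\lesssim R_k$ but after rescaling by $1/R_k$ is bounded) by an $i$-disk whose Lipschitz constant is controlled by the connectivity constant of $X$. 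Tracking diameters through the $n$ stages gives a uniform Lipschitz bound $L$, and since on vertices $\Psi_k=\Phi_k$ and every point of a simplex is within $O(1)$ (hence $O(R_k\eps_k')$-negligibly, after noting $1=o(R_k)$) of a vertex, \eqref{Def:RHA} survives with a new null sequence.

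The main obstacle is the bookkeeping in the two ``smoothing'' bounds of the convolution argument, namely verifying that the diagonal coupling of the two bump-averaged measures is admissible in the sense of Theorem~\ref{Thm:BaryMeasure}\eqref{Thm:BaryMeasure:Cont}/\eqref{Eq:IntEst} and extracting from it both the Lipschitz estimate $\lesssim \|x-x'\|/r$ and the distortion estimate with the \emph{same} $\eps_k$; one must be careful that the Riemann-integral construction of Section~\ref{Chapter:Barycenters} applies (Riemann integrability of $v\mapsto\Phi_k(x+v)$), which is the reason to first replace an arbitrary Hausdorff approximation by a Borel — or even locally constant on a fine grid — map at bounded distance before convolving. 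In the Lipschitz-connected variant the analogous delicate point is ensuring the connectivity constants do not accumulate across the $n$ skeleton-filling steps, i.e.\ that the final Lipschitz constant $L$ is independent of $k$.
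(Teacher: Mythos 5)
Your smoothing idea is close in spirit to the paper's own convolution trick (used there in Step~2), but as written the scale bookkeeping contains a genuine error that the proposal does not resolve. The claimed Lipschitz bound $d(\tilde\Phi_k(x),\tilde\Phi_k(x'))\le(\mathrm{const}/r)\|x-x'\|$ does not follow from $L^1$-closeness of the two bump densities: Theorem~\ref{Thm:BaryMeasure}\eqref{Thm:BaryMeasure:Cont}/\eqref{Eq:IntEst} turn mass difference into distance only after multiplying by how far the excess mass must be transported, i.e.\ by the diameter of $\Phi_k(B(x,r)\cup B(x',r))$, which by \eqref{Def:RHA} is of order $r+R_k\eps_k$. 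So the true constant is of order $1+R_k\eps_k/r$, and since $R_k\eps_k$ may tend to infinity (only $\eps_k\to 0$ is assumed), the choice $r=1$ gives no uniform Lipschitz constant. The alternative choice $r=cR_k$ fixes the Lipschitz constant but destroys the RHA property: there is no ``symmetric lower bound'' coming from the integral estimate (barycenters only contract), so the lower bound must pass through $d(\tilde\Phi_k(x),\Phi_k(x))\le r+R_k\eps_k$, and with $r=cR_k$ this additive error is comparable to $R_k$, not $o(R_k)$, so \eqref{Def:RHA} fails for $\tilde\Phi_k$. The repair is exactly an intermediate scale, $R_k\eps_k\le r_k\prec R_k$ (e.g.\ $r_k=R_k\eps_k$), together with your pre-discretization to secure Riemann integrability; once you do that discretization on a grid of mesh $R_k\eps_k$ you are essentially at the paper's proof, which restricts $\Phi_k$ to the grid $R_k\eps_k\Z^n$ (where \eqref{Eq:GridEst} gives a uniform Lipschitz bound) and then interpolates over each cube by the barycenter of the finitely many vertex values with multilinear weights---no continuum integration and no measurability issues at all.

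The same scale error recurs in your Lipschitz $(n-1)$-connected variant: triangulating at mesh $\sim 1$ puts adjacent vertices at image distance up to $\mathrm{const}(1+R_k\eps_k)$, so the skeleton-by-skeleton filling yields Lipschitz constant of order $R_k\eps_k$, and the remark that this ``after rescaling by $1/R_k$ is bounded'' does not help, since Lipschitz constants are invariant under rescaling domain and target simultaneously and the conclusion requires $L$-Lipschitz maps into $X$ with its unrescaled metric. The mesh must again be taken $\sim R_k\eps_k$, where the restricted map is uniformly Lipschitz; the paper then simply invokes the Lipschitz extension theorem (Theorem~1.5 in \cite{LanS}) instead of redoing the skeletal construction, which is also where the uniformity of the filling constants across the $n$ stages---the point you flag but do not verify---is actually established.
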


\begin{proof}
Set $\lam_k=R_k\eps_k$ and restrict every $\Phi_k$ to those points of $\lam_k\Z^n$ that lie in $B(R_k)$. On this grid (which is a $\delta\lam_k$-separated set for ${\displaystyle \delta:=\min_{x\neq y\in\Z^n}\|x-y\|>0}$) we have
\begin{equation}\label{Eq:GridEst}
d(\Phi_k(x),\Phi_k(y))\leq \|x-y\|+\lam_k\leq \left(1+\frac{1}{\delta}\right)\|x-y\| .
\end{equation}
For every $x\in\lam_k\Z^n$ the set $x+\lam_k[0,1]^n$ is a cube with vertices in the grid. Let $C_k$ be the union of those cubes contained in $B(R_k)$. We extend every restricted map $\tilde\Phi_k$ to $C_k$ through the following procedure: To $y\in x+\lam_k[0,1]^n$ we assign a probability measure supported on $\tilde\Phi_k(x+\lam_k\{0,1\}^n)$ having, for every $e\in\{0,1\}^n$, weight\footnote{The chosen weights are one example of barycentric coordinates in an affine space. Taking the linear combination of these weights with the points $x+\lam_ke$ would recover $y$.}
\[
\prod_{i=1}^n (1-e_i)-(1-2e_i)\!\left(\frac{y_i-x_i}{\lam_k}\right)
\]
at $\tilde\Phi_k(x+\lam_ke)$. The barycenter thereof shall then be the value for $\tilde\Phi_k(y)$. Observe that where the cubes intersect the values coincide. These extensions are $L$-Lipschitz for some constant independent of $k$.
With $\Delta:=\diam_{\|\cdot\|}([0,1]^n)$ we have $B(R_k-\Delta\lam_k)\subset C_k$ and on these sets we get
\[
\big| d(\tilde\Phi_k(x),\tilde\Phi_k(y))-\|x-y\|\big| \leq 2\Delta\lam_k L+2\Delta\lam_k+\lam_k
\]
when comparing $x,y$ to two $\Delta\lam_k$-close points on the grid together with the first inequality of \eqref{Eq:GridEst}. Since the right hand side divided by $R_k-\Delta\lam_k$ converges to zero, the $\tilde\Phi_k$ form an RHA of $L$-Lipschitz maps.

A space $X$ is Lipschitz $n-1$-connected if there is a constant $l$ such that for every $m\in\{0,1,\ldots,n-1\}$, every $c$-Lipschitz map from the standard sphere $S^m$ into $X$ possesses a $lc$-Lipschitz extension to $B^{m+1}$ (the unit ball whose boundary is $S^m$). Given this, the restricted maps $\tilde\Phi_k$ may be extended directly through Theorem~1.5 in \cite{LanS}. Since the existence of a bicombing easily implies that a space is Lipschitz $(n-1)$-connected (even for arbitrary $n$), this yields a shorter proof of this lemma (and without requiring the bicombing to be reversible).
\end{proof}

In both Steps~2 and 3 we apply Fubini's theorem several times to real valued functions $h\in L^1(\Omega,\mathcal{L}_n)$ defined on (reasonably nice) subsets $\Omega\subset(\R^n,\|\cdot\|)$ of the normed space at hand. For a direction $v\in\di B$, let $C$ be the orthogonal projection of $\Omega$ to $v^\bot$, the Euclidean orthogonal complement of $v$. We then have
\[
\int_\Omega h\,d\mathcal{L}^n = \int_C \int_{(c+\R v)\cap\,\Omega} h(t)\,d\mathcal{L}^1_v(t)\,d\mathcal{L}^{n-1}_{v^\bot}(c) .
\]
$\mathcal{L}^n$ denotes the $n$-dimensional Lebesgue measure on $\R^n$. $\mathcal{L}^1_v$ shall be the push-forward of $(\R,\mathcal{L}^1)$ by an isometry $\R\to (c+\R v)\subset(\R^n,\|\cdot\|)$, thus it measures the length w.r.t.~the given norm $\|\cdot\|$ (which may differ from the Euclidean norm $\|\cdot\|_2$). Finally, in order for the product of the right hand side measures to yield $\mathcal{L}^n$, we have to rescale $\mathcal{L}^{n-1}$ on $v^\bot$ appropriately. The correct factor is $\|v\|_2/\|v\|$ and the resulting measure denoted by $\mathcal{L}^{n-1}_{v^\bot}$.

The next step employs a smoothing procedure by means of integration. Recall from Section~\ref{Chapter:Barycenters} the definition of Riemannian integrals for maps with values in spaces with reversible bicombings. The definition~\eqref{eq:folding} can be seen as a sort of convolution operation that evens out the defects of the given RHA.

\begin{Lem}[Step 2]\label{LemStep2} From an $L$-Lipschitz RHA for a space with reversible bicombing one may construct a $1$-Lipschitz RHA.
\end{Lem}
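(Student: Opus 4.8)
The plan is to average each map $\Phi_k$ of the given $L$-Lipschitz RHA over a small ball in the domain $(\R^n,\|\cdot\|)$, using the Riemannian integral developed in Section~\ref{Chapter:Barycenters}, and to show that the averaging radius can be chosen (growing, but sublinearly in $R_k$) so that the resulting maps become $1$-Lipschitz in the limit while the RHA estimate \eqref{Def:RHA} is preserved. Concretely, I would fix a radius parameter $r_k$ with $r_k\to\infty$ but $r_k/R_k\to 0$, and define on the slightly shrunken domain $B(R_k-r_k)$ the new map
\begin{equation}\label{eq:folding}
\Psi_k(x):=\int_{B(r_k)} \Phi_k(x+v)\,\frac{d\mathcal{L}^n(v)}{\mathcal{L}^n(B(r_k))} .
\end{equation}
Since $\Phi_k$ is $L$-Lipschitz (hence continuous and bounded on the compact ball $B(r_k)$), the integrand is Riemann integrable in the sense of Section~\ref{Chapter:Barycenters}, so $\Psi_k$ is well defined. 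The RHA property for $\Psi_k$ follows from \eqref{Eq:IntEst}: for $x,y\in B(R_k-r_k)$,
\[
d(\Psi_k(x),\Psi_k(y))\le \int_{B(r_k)} d(\Phi_k(x+v),\Phi_k(y+v))\,\frac{d\mathcal{L}^n(v)}{\mathcal{L}^n(B(r_k))} \le \|x-y\|+R_k\eps_k,
\]
and the lower bound $d(\Psi_k(x),\Psi_k(y))\ge \|x-y\|-R_k\eps_k$ is obtained the same way (using that $x\mapsto \Phi_k(x+v)$ and $y\mapsto\Phi_k(y+v)$ pointwise satisfy \eqref{Def:RHA}, so the integral distance is at least $\|x-y\|-R_k\eps_k$ — here one compares $\int d(\Phi_k(x+v),\Phi_k(y+v))$ with $\int \|(x+v)-(y+v)\|=\|x-y\|$ and uses \eqref{Eq:IntEst} together with the reverse triangle inequality on the integral). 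Dividing by $R_k-r_k$ and using $r_k/R_k\to 0$ shows $\Psi_k$ is again an RHA.

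The crux is the Lipschitz bound. Fix a direction $v\in\di B$ and two points $x,x+tv$ with $t>0$ small; the idea is that translating the averaging domain $B(r_k)$ by $tv$ overlaps itself in all but a sliver of relative measure $\approx t/r_k$, and on the overlap the two integrands agree exactly. More precisely, write
\[
d(\Psi_k(x),\Psi_k(x+tv))\le \int_{B(r_k)} d\big(\Phi_k(x+v'),\Phi_k(x+tv+v')\big)\,\frac{d\mathcal{L}^n(v')}{\mathcal{L}^n(B(r_k))},
\]
then replace $v'\mapsto v'$ in the first slot by $v'\mapsto v'+tv$ on the part of $B(r_k)$ where both $v'$ and $v'+tv$ lie in $B(r_k)$: there the contribution telescopes. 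Using Fubini in the direction $v$ (exactly the slicing formula recorded just before this lemma, with the measure $\mathcal{L}^{n-1}_{v^\bot}$ so that the product is $\mathcal{L}^n$), one integrates along each line $c+\R v$ a one-dimensional increment, and the pieces that do \emph{not} cancel are two end-caps of $v$-length $t$ on each line, on which one only has the crude bound $L\,t$ from $L$-Lipschitzness of $\Phi_k$. This produces
\[
d(\Psi_k(x),\Psi_k(x+tv))\le \left(1 + \frac{C\,L\,t}{r_k}\right) t\|v\| \quad\text{(roughly)},
\]
or more honestly an estimate of the form $d(\Psi_k(x),\Psi_k(x+tv))\le t + (\text{error}\cdot L\cdot t^2/r_k)$ after using the RHA bound $\le \|x-y\|+R_k\eps_k = t + R_k\eps_k$ on the overlap integral; one then lets $k\to\infty$ and observes the extra terms vanish. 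I would then promote this infinitesimal/directional estimate to a global Lipschitz estimate: since it holds for every direction $v\in\di B$ with the \emph{same} constant going to $1$, a chaining argument along a broken line (or a direct application to the segment $[x,y]$ in $(\R^n,\|\cdot\|)$, subdivided finely) gives $d(\Psi_k(x),\Psi_k(y))\le (1+o(1))\|x-y\|$ uniformly. Finally, rescaling each $\Psi_k$ by $1/(1+o(1))$ — equivalently, precomposing with a slight homothety of the domain, or just absorbing the factor as in Step~1 — yields an RHA whose maps are genuinely $1$-Lipschitz.

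The main obstacle I anticipate is making the cancellation-on-the-overlap argument clean: one must be careful that the Riemann/barycenter integral \eqref{eq:folding} behaves well under the change of variables $v'\mapsto v'+tv$ restricted to a subdomain, and that the Fubini slicing of Section~2 is legitimately applicable to the $X$-valued integrand (it is, because \eqref{Eq:IntEst} lets one pass to the real-valued integrand $d(f,g)$, to which ordinary Fubini applies). A secondary, purely bookkeeping, difficulty is tracking the interplay of the three small/large scales $\eps_k,\;r_k,\;R_k$ so that both the RHA error $(\,\cdot\,)/R_k\to 0$ \emph{and} the Lipschitz defect $L\,t/r_k\to 0$; the choice $r_k:=\sqrt{R_k}$ (say), together with $r_k\eps_k\to 0$ possibly after passing to a subsequence, comfortably achieves both.
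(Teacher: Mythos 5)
Your overall strategy---average each $\Phi_k$ over balls of an intermediate radius and then correct by a factor tending to $1$---is exactly the paper's, but the heart of the lemma, namely that the averaged maps have Lipschitz constant $1+o(1)$ \emph{independently of $L$}, is not established by your sliver estimate, and the argument breaks precisely there. First, your matching is not mass-balanced: letting the contribution ``telescope'' on the overlap $B(x,r_k)\cap B(x+tv,r_k)$ (identity matching in absolute coordinates) while bounding the remaining pieces by $Lt$ (which tacitly matches each leftover point with its $tv$-translate) does not define a transport from the normalized measure on $B(x,r_k)$ to the one on $B(x+tv,r_k)$: the rear sliver shifted by $tv$ lands back in the overlap, which already carries its own mass, and the front sliver of $B(x+tv,r_k)$ receives none, so the comparison \eqref{Eq:IntEst} (equivalently Theorem~\ref{Thm:BaryMeasure}\eqref{Thm:BaryMeasure:Cont}) does not apply. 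Second, under any admissible matching the two slivers must be paired with each other, and they sit at distance comparable to the chord length, i.e.\ of order $r_k$, not $t$; the $L$-Lipschitz bound then gives roughly $2Lr_k$ pointwise on a set of relative measure about $t/r_k$, hence an error of order $Lt$---a Lipschitz constant $1+cL$, not your claimed $t+CLt^2/r_k$, and not $1+o(1)$. (Your fallback ``RHA bound $t+R_k\varepsilon_k$ on the overlap'' does not help either: $R_k\varepsilon_k$ need not be small, and on the overlap the correctly matched contribution is $0$ anyway.)

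What is missing is the key idea of the paper's proof: pair the slivers line by line by translation along $v$ through the full chord, and bound the displaced values not by Lipschitzness but by the RHA estimate itself. For this one needs the averaging radius to dominate the RHA error, $R_k\varepsilon_k\prec\lambda_k$; your conditions $r_k\to\infty$, $r_k=\sqrt{R_k}$ do not guarantee this (the error $R_k\varepsilon_k$ may diverge, and passing to a subsequence cannot create $r_k\varepsilon_k\to0$). The paper inserts an intermediate scale $R_k\varepsilon_k\prec\mu_k\prec\lambda_k$: on lines whose chord exceeds $\mu_k$ the displacement of the matching is at least $\mu_k$, so $d(\Phi_k,\Phi_k\circ\phi_k)\le(1+\delta_k)\cdot(\text{chord length})$ with $\delta_k\to0$, and after weighting by the sliver length $\le s$ per line this integrates to $(1+\delta_k)\|x-y\|$ with $L$ gone; the crude bound $L\mu_k$ is used only on lines with chord shorter than $\mu_k$, whose total weight vanishes because $\mu_k\prec\lambda_k$. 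Two smaller points: the lower RHA bound for $\Psi_k$ cannot be obtained by a ``reverse triangle inequality on the integral'' (barycenters of far-apart configurations may be close); the paper instead uses $d(\Phi_k(x),\Psi_k(x))\le L\lambda_k\prec R_k$. And the final passage to genuinely $1$-Lipschitz maps is done there by postcomposing with the bicombing contraction towards $\Psi_k(0)$, though your domain-homothety variant would also serve.
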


\begin{proof} Let $\Phi_k,R_k,\eps_k$ be as presumed and choose sequences $\mu_k,\lam_k$ such that $R_k\eps_k\prec\mu_k\prec\lam_k\prec R_k$, where $a_k\prec b_k$ means $a_k/b_k\to 0$. Observe that from $R_k\eps_k\prec\mu_k$ we obtain a sequence $\del_k\to 0$, such that $\|x-y\|\geq\mu_k$ implies $d(\Phi_k(x),\Phi_k(y))\leq(1+\del_k)\|x-y\|$. So on this intermediate scale, so to speak, we still have RHA-like estimates; we will use this later in the proof. We define a new RHA $\Psi_k$ on $B(R_k-\lam_k)$ by
\begin{equation}\label{eq:folding}
\Psi_k(x) := \int_{B(x,\lam_k)} \Phi_k \,d\mathcal{L}^n .
\end{equation}
As we defined the integral for probability measures only, the above integral is understood to be taken with respect to the measure appropriately normalized. Comparing $\Phi_k$ to $\Psi_k$ by means of \eqref{Eq:IntEst} yields $d(\Phi_k(x),\Psi_k(x))\leq L\lam_k$, thus $\Psi_k$ is in fact an RHA. In order to compare $\Psi_k(x)$ and $\Psi_k(y)$ we define a map $\phi_k$ from $B(x,\lam_k)$ to $B(y,\lam_k)$. For every $p\in B(x,\lam_k)$, the (possibly non-unit speed) line $t\mapsto p+t(y-x)$ intersects both $B(x,\lam_k)\!\setminus\!\operatorname{Interior}(B(y,\lam_k))$ and $B(y,\lam_k)\!\setminus\!\operatorname{Interior}(B(x,\lam_k))$ in a segment of equal length. Let $\phi_k$ map every such first segment onto the corresponding second by appropriate (and unique) translation in direction of $y-x$. On the remaining part of $B(x,\lam_k)$, which is the interior of $B(x,\lam_k)\cap B(y,\lam_k)$, $\phi_k$ shall be the identity. Elementary geometry tells us that $\phi_k$ is measure preserving (i.e.~$\mathcal{L}^n(\phi_k(A)) = \mathcal{L}^n(A)$) and $\Phi_k\circ\phi_k$ is integrable on $B(x,\lam_k)$. Moreover, by a direct comparison of the Riemann sums involved,
\[
\Psi_k(y)=\int_{B(x,\lam_k)} \Phi_k\circ\phi_k\,d\mathcal{L}^n
\]
and consequently
\begin{equation}\label{Eq:Psixy}
d(\Psi_k(x),\Psi_k(y)) \leq \frac{1}{\mathcal{L}^n(B(\lam_k))}\int_{B(x,\lam_k)} d(\Phi_k(t),\Phi_k(\phi_k(t)))\,d\mathcal{L}^n(t) .
\end{equation}
\begin{figure}[!ht]
\def\svgwidth{1.05\textwidth}
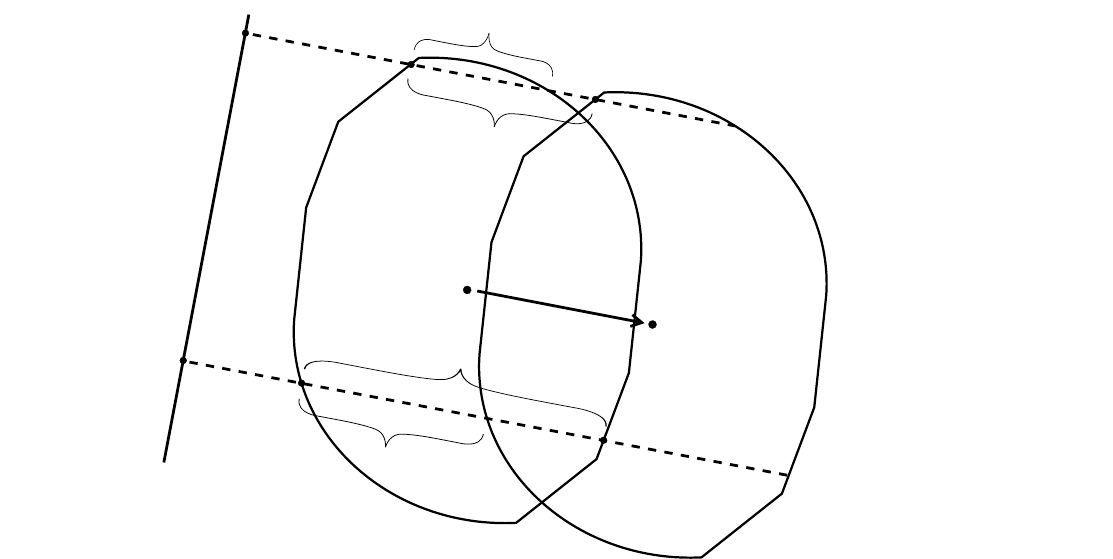
\caption{case analysis}
\end{figure}%
Now we fix a direction $v\in\di B$ and estimate \eqref{Eq:Psixy} for $y = x+sv$ and $s\leq \mu_k$. Let $C_{v,k}$ be the orthogonal projection of $B(x,\lam_k)$ to $v^\bot$.
We can rewrite the last integral as
\[
\frac{1}{\mathcal{L}^n(B(\lam_k))}\int_{C_{v,k}}\int_{(c+\R v)\cap B(x,\lam_k)}d(\Phi_k(t),\Phi_k(\phi_k(t)))\,d\mathcal{L}^1_v(t)\,d\mathcal{L}^{n-1}_{v^\bot}(c) .
\]
Let $l_{v,k}\colon C_{v,k}\to\R$ be the map that assigns the length of $(c+\R v)\cap B(x,\lam_k)$ to every $c$. We consider two cases. Case~1: $l_{v,k}(c)\geq \mu_k (\geq s)$. Then $d(\Phi_k,\Phi_k\circ\phi_k)$ is non-zero on an initial segment of $(c+\R v)\cap B(x,\lam_k)$ of length $s$ at most. There $\phi_k$ has displacement exactly $l_{v,k}(c)$ hence $d(\Phi_k,\Phi_k\circ\phi_k) \leq (1+\delta_k)l_{v,k}(c)$ where $\delta_k$ is the sequence mentioned in the beginning.
Case~2: $l_{v,k}(c)<\mu_k$. Again, the integrand is non-zero only on the initial segment of length at most $s$ and we estimate the displacement of $\phi_k$ --- which is $\max\{l_{v,k}(c),s\}$ --- by $\mu_k$. Consequently $d(\Phi_k,\Phi_k\circ\phi_k) \leq L\mu_k$ from the assumed Lipschitz continuity of $\Phi_k$. Now we split the integral according to these two cases and get for the first one
\[
\frac{1}{\mathcal{L}^n(B(\lam_k))}\int_{C_{v,k}}s(1+\delta_k)l_{v,k}(c)\,d\mathcal{L}^{n-1}_{v^\bot}(c) = (1+\delta_k)s = (1+\delta_k)\|x-y\|.
\]
The second part can be estimated generously by
\[
\frac{\mathcal{L}^{n-1}_{v^\bot}\!(C_{v,k})L\mu_k}{\mathcal{L}^n(B(\lam_k))}s = \frac{\lam_k^{n-1}\mathcal{L}^{n-1}_{v^\bot}\!(C_v)L\mu_k}{\lam_k^n\mathcal{L}^n(B)}s ,
\]
where $C_v$ is the orthogonal projection of the unit ball $B$. The measure of the cross sections $\mathcal{L}^{n-1}_{v^\bot}\!(C_v)$ is bounded by a constant independent of $v$, and therefore the above fraction (i.e.\ everything except the $s$) converges uniformly to zero for $k\to\infty$. Adding this to $\delta_k$ we arrive at $d(\Psi_k(x),\Psi_k(y)) \leq (1+\delta_k)\|x-y\|$ whenever $\|x-y\|\leq\mu_k$. Since the domains $B(R_k-\lam_k)$ are convex, the Lipschitz estimate extends to all pairs $x,y$. To conclude Step~2, we concatenate every $\Psi_k$ with the $1/(1+\delta_k)$-Lipschitz contraction $z\mapsto \sig_{\Psi_k(0)z}(1/(1+\delta_k))$ towards $\Psi_k(0)$ producing $1$-Lipschitz maps $\tilde\Psi_k$. And the estimate $d(\Psi_k(x),\tilde\Psi_k(x))\leq\delta_k/(1+\delta_k)d(\Psi_k(0),\Psi_k(x))$ together with, say, $d(\Psi_k(0),\Psi_k(x))\leq2R_k$ for all but finitely many $k$ ensures that $\tilde\Psi_k$ is still an RHA.
\end{proof}

We can now conclude the proof of Theorem~\ref{Thm:D} through the next and final Step~3. We use a kind of metric differentiation argument. Since we already made the maps of the RHA $1$-Lipschitz, we must now deal with the fact that they may collapse locally. But, intuitively, at points where the differential is large this should not be the case. Keep in mind the following example of an RHA, which may justify why we have to chose a density point $a$ in the proof below: $\Phi_k\colon B(k)\to \R^n,\, x\mapsto \max\{\|x\|-\sqrt{k},0\}\,x/\|x\|$, thus $\Phi_k$ maps $B(\sqrt{k})$ to $0$ and every other $x$ a $\sqrt{k}$-step towards $0$. Now $a=0$ is no appropriate choice to complete Step~3 as $\Phi_k$ converges uniformly on compact sets to the constant function $\Phi=0$.

\begin{Lem}[Step 3]\label{LemStep3} From a $1$-Lipschitz RHA for a proper space with cocompact isometry group, we may construct an isometric embedding of the normed space $(\R^n,\|\cdot\|)$.
\end{Lem}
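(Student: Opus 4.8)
The plan is to extract a genuine isometric embedding from the $1$-Lipschitz RHA by a compactness-plus-differentiation scheme, using cocompactness to prevent the maps from collapsing at a well-chosen basepoint. First I would fix, for each $k$, the normalized Lebesgue measure on $B(R_k)$ (or on the domain $B(R_k-\lam_k)$ produced by Step~2) and consider the function $a\mapsto \rho_k(a)$ measuring, say, the average over directions $v\in\di B$ of $d(\Phi_k(a),\Phi_k(a+r v))/r$ for a fixed small radius $r$, or more robustly the ``best lower bound for the local bi-Lipschitz behavior at $a$.'' By an averaging (Fubini/Markov) argument, since the maps are asymptotically distance-preserving on scale $\mu_k$, the set of points $a\in B(R_k)$ at which $\Phi_k$ is badly collapsed has small relative measure; hence one can choose density points $a_k\in B(R_k)$ at which $\Phi_k$ is ``almost isometric at all scales up to $o(R_k)$.'' Then, using cocompactness, pick isometries $\gam_k$ of $X$ moving $\Phi_k(a_k)$ into a fixed compact set $K\subset X$, and replace $\Phi_k$ by $x\mapsto \gam_k(\Phi_k(a_k+x))$, reindexing so that the new maps $\Psi_k\colon B(R_k')\to X$ are $1$-Lipschitz, send $0$ into $K$, and still satisfy \eqref{Def:RHA} — but now additionally inherit a lower distance bound near $0$ coming from the density-point choice.

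Next I would pass to a limit. The maps $\Psi_k$ are $1$-Lipschitz, their domains $B(R_k')$ exhaust $\R^n$, and $\Psi_k(0)\in K$ is relatively compact; so by Arzelà–Ascoli (properness of $X$ is essential here, since closed balls in $X$ are compact) a subsequence converges uniformly on compact subsets of $(\R^n,\|\cdot\|)$ to a $1$-Lipschitz map $\Psi\colon(\R^n,\|\cdot\|)\to X$. The RHA estimate \eqref{Def:RHA} with $R_k'\to\infty$ and $\eps_k\to 0$ forces $d(\Psi(x),\Psi(y))\le\|x-y\|$ on one hand (Lipschitz) and, passing to the limit in $d(\Psi_k(x),\Psi_k(y))\ge\|x-y\|-R_k'\eps_k$, the opposite inequality $d(\Psi(x),\Psi(y))\ge\|x-y\|$ on the other. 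Hence $\Psi$ is an isometric embedding of $(\R^n,\|\cdot\|)$ into $X$, which is the conclusion.

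The subtlety — and the reason the density-point choice and the example preceding the lemma are singled out — is that \eqref{Def:RHA} by itself only controls $d(\Psi_k(x),\Psi_k(y))$ when $\|x-y\|$ is large compared to $R_k'\eps_k$, and after the translation-and-rescaling used to bring the picture to a fixed location the relevant pairs $x,y$ are at bounded distance while $\Psi_k$ is only known to be $1$-Lipschitz; so a priori the limit map could collapse (exactly as in the example $\Phi_k(x)=\max\{\|x\|-\sqrt k,0\}\,x/\|x\|$ centered at $a=0$). The main obstacle, therefore, is the differentiation step: showing that a positive-measure (indeed density-point) set of ``good'' centers $a_k$ exists at which the $1$-Lipschitz map does not collapse even on bounded scales, i.e.\ converting the large-scale rigidity of the RHA into bounded-scale rigidity at a suitable point. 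I expect this to be carried out by a metric-differentiation / Lebesgue-density argument: the total ``loss of length'' of $\Phi_k$ along lines in any fixed direction $v$ is $o(R_k)$ (by integrating the one-dimensional estimate, as in Step~2), so for a density-$1$ set of $a$ and every fixed scale the displacement $d(\Phi_k(a),\Phi_k(a+rv))$ is $(1-o(1))r$; choosing $a_k$ in the intersection over a dense countable set of directions and a countable set of scales, and translating there, yields the required lower bound in the limit.
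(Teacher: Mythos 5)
Your skeleton — average the RHA deficit along lines, pick a good center $a_k$, recenter using the cocompact isometry group, extract a limit by Arzel\`a--Ascoli in the proper space $X$, and upgrade large-scale rigidity to bounded-scale rigidity by a differentiation/density argument — is the same as the paper's. But there is a genuine gap at the decisive step. As written, your selection of $a_k$ (intersecting, for each $k$, finitely many of the good sets $G_{v,r,k}:=\{a \,:\, d(\Phi_k(a),\Phi_k(a+rv))\ge(1-\delta)r\}$ over a countable family of directions $v$ and scales $r$) only produces lower bounds for pairs through the chosen center: in the limit you get $d(\Phi(0),\Phi(rv))\ge r$, hence $d(\Phi(0),\Phi(z))=\|z\|$ for all $z$. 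That is strictly weaker than being an isometric embedding: a $1$-Lipschitz map can preserve all distances to one point and still collapse other pairs (fold the plane along a line through the origin, $(z_1,z_2)\mapsto(z_1,|z_2|)$). Nothing in your sketch controls $d(\Phi(x),\Phi(y))$ for $x,y\neq 0$. (Relatedly, the sentence in your second paragraph about passing to the limit in $d(\Psi_k(x),\Psi_k(y))\ge\|x-y\|-R_k'\eps_k$ is not valid, since $R_k'\eps_k$ need not tend to zero; you acknowledge this yourself, but it means the entire burden of the lower bound falls on the differentiation step, which as formulated covers only the basepoint.)

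The paper closes exactly this hole with the Lebesgue density point and a thickening device: $a$ is chosen as a density point of the single set $A=\bigcap_{k,v\in D}\{x\in B \,:\, h_{v,k}(x)\ge c_{v,k}\}$, where $h_{v,k}$ is the rescaled directional derivative of $H_k(x)=d(\Phi_k(p_k(x)),\Phi_k(x))$. If the limit map collapsed some pair $x,y$ with $y-x$ parallel to $v\in D$, the $1$-Lipschitz property would force a comparable collapse on a whole cylinder $Z$ of parallel segments; integrating $\di_v H_k$ over $Z+R_ka$ (Fubini plus the fundamental theorem of calculus) then produces a deficit of size $l\,\mathcal{L}^n(Z)$ inside $B(a,R/R_k)$, contradicting the density-point property of $a$. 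Your route could be repaired in the same spirit without cylinders: diagonalize additionally over a countable dense set of offsets $x$, i.e.\ require $a_k\in G_{v,r,k}-x$ for the finitely many conditions imposed at stage $k$ (each such translate still has relative measure tending to $1$), so that in the limit $d(\Phi(x),\Phi(x+rv))\ge r$ for a dense set of triples $(x,v,r)$, and conclude by continuity. As it stands, though, the passage from rigidity at the basepoint to rigidity for arbitrary pairs — the heart of Step 3 — is missing.
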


\begin{proof} Let $\Phi_k\colon B(R_k)\to X$ be the presumed $RHA$ of $1$-Lipschitz maps 
and pick any direction $v\in\di B$. For every $x\in B(R_k)$ there is a minimal $t\in\R$ such that $p_k(x):=x+tv\in B(R_k)$ defining a map $p_k\colon B(R_k)\to B(R_k)$. Furthermore, set $H_k\colon B(R_k)\to\R, x\mapsto d(\Phi_k(p_k(x)),\Phi_k(x))$. This map is $1$-Lipschitz along straight segments $[p_k(x),q_k(x)]\subset B(R_k)$, where $q_k(x):=x+tv\in B(R_k)$ for a maximal $t$, and locally Lipschitz at interior points of $B(R_k)$. The latter claim holds since $p_k$ is Lipschitz there too, which may be seen by looking at the convex hull of an appropriate ball around an interior point $x$ and its projection $p_k(x)$. Therefore, and by virtue of Rademacher's theorem, we may differentiate $H_k$ in direction of $v$ yielding an $L^\infty(B(R_k),\mathcal{L}^n)$ derivative $\di_vH_k\colon B(R_k)\to [-1,1]$ defined almost everywhere with respect to the $n$-dimensional Lebesgue measure. From the fundamental theorem of calculus for Lipschitz functions we deduce
\[
\int_0^{d(p_k(x),q_k(x))} \di_v H_k\big(p_k(x)+tv\big) \,dt = H_k(q_k(x)) .
\]
For the maps $h_{v,k}(x)\colon B\to [-1,1], x\mapsto\di_v H_k(R_kx)$ --- that are just rescaled versions of the $\di_v H_k$ --- this translates to
\[
\int_{(x+\R v)\cap B} h_{v,k} \,d\mathcal{L}^1_v = \frac{d\big(\Phi_k(p_k(R_kx)),\Phi_k(q_k(R_kx))\big)}{R_k}.
\]
The right hand side is less or equal to the length of $(x+\R v)\cap B$ and at least that quantity minus $\eps_k$ (the $\eps_k$ being the one provided by the RHA). Let $C_v$ be the orthogonal projection of $B$ to $v^\bot$. Fubini's theorem gives
\[
\int_B h_{v,k}\,d\mathcal{L}^n = \int_{C_v} \int_{(x+\R v)\cap B} h_{v,k} \,d\mathcal{L}^1_v\,\mathcal{L}^{n-1}_{v^\bot}(x) ,
\]
thus we reach
\[
\mathcal{L}^n(B)-\eps_k\mathcal{L}^{n-1}_{v^\bot}(C_v)\leq \int_B h_{v,k}\,d\mathcal{L}^n \leq \mathcal{L}^{n}(B) .
\]
Since $h_{v,k}$ is bounded by $1$, we necessarily have $\mathcal{L}^n\!\left(\{x\in B \,|\, h_{v,k}(x)\geq 1-\delta \}\right)$ converging to $\mathcal{L}^n(B)$ for every $\delta>0$ when $k\to\infty$. As this holds for arbitrary directions $v$, we can pick a countable dense subset $D\subset \di B$ and, by a diagonal subsequence construction, assume that the set
\[
A:=\bigcap_{k\in\N,v\in D}\{x\in B\,|\, h_{v,k}(x) \geq c_{v,k} \}
\]
has positive $n$-dimensional Lebesgue measure for an appropriate choice of real numbers $c_{v,k}$ converging to $1$ for every fixed $v\in D$.
By virtue of Lebesgue's density theorem we next choose a density point $a\in A$ in the interior of $B$; by definition this is a point such that
\[
\lim_{\delta\to 0} \frac{\mathcal{L}^n(A\cap B(a,\delta))}{\mathcal{L}^n(B(a,\delta))} = 1 .
\]
Recall that $X$ was assumed to be proper. We make use of the cocompact isometry group as usual and can assume that $\Phi_k\circ (x\mapsto x+R_ka)$ converges uniformly on compacta to a $1$-Lipschitz map $\Phi\colon(\R^n,\|\cdot\|)\to X$. It remains to verify that $d(\Phi(x),\Phi(y))\geq\|x-y\|$ and it suffices to consider pairs where $x-y$ is a multiple of some $v\in D$. To obtain a contradiction, assume there is an $l>0$ with
\[
d(\Phi_k(x+R_ka),\Phi_k(y+R_ka))\leq(1-2l)\|x-y\|
\]
for all but finitely many $k\in\N$. Since the maps at hand are $1$-Lipschitz, a sufficiently small $r$ provides
\[
d(\Phi_k(x+s+R_ka),\Phi_k(y+s+R_ka))\leq(1-l)\|x+s-y-s\|
\]
for $s$ in $G:=\{ s \,|\, \|s\|\leq r,\, s\bot x-y \}$. Let $Z$ be the convex hull of $(x+G)\cup(y+G)$; a cylindric shape. From Fubini's theorem and the fundamental theorem of calculus we deduce
\begin{align*}
\int_{Z+R_ka} \di_v H_k \,d\mathcal{L}^n & = \int_{x+G+R_ka}\int_{(g+\R v)\cap(Z+R_ka)}
 \di_v H_k\,d\mathcal{L}^1_v \,d\mathcal{L}^{n-1}_{v^\bot}(g) \\
& \leq \int_{x+G+R_ka}(1-l)\|x-y\| \,d\mathcal{L}^{n-1}_{v^\bot}(g) = (1-l)\mathcal{L}^n(Z)
\end{align*}
for $k$ large enough. This translates to
\[
\int_{a+(1/R_k)Z} h_{v,k}\,d\mathcal{L}^n \leq (1-l)\mathcal{L}^n((1/R_k)Z)
\]
and, for $R>0$ (and $k$ large enough) such that $a+(1/R_k)Z\subset B(a,R/R_k)\subset B$, finally
\begin{align*}
\int_{B(a,R/R_k)} h_{v,k}\,d\mathcal{L}^n & \leq \mathcal{L}^n(B(a,R/R_k))-l\mathcal{L}^n((1/R_k)Z) \\
& = \mathcal{L}^n(B(a,R/R_k))\!\left[1-\frac{l\mathcal{L}^n(Z)}{\mathcal{L}^n(B(a,R))}\right] .
\end{align*}
But this contradicts the fact that $a$ is a density point of $A$. As for every $\alpha\in(0,1)$, we have 
\begin{align*}
\mathcal{L}^n\big(B(a,R/R_k)\cap \{x\in B\,|\, h_{v,k}(x) \geq c_{v,k} \}\big) & \geq
\mathcal{L}^n(B(a,R/R_k)\cap A) \\
& \geq\alpha\mathcal{L}^n(B(a,R/R_k))
\end{align*}
eventually, so $\alpha c_{v,k}\mathcal{L}^n(B(a,R/R_k))-(1-\alpha)\mathcal{L}^n(B(a,R/R_k))$ becomes a lower bound for the above integral at last. Therefore $d(\Phi(x),\Phi(y))\geq\|x-y\|$ which completes the proof.
\end{proof}

These three steps compose a proof of Theorem~\ref{Thm:D}, and
as a consequence we obtain the initially mentioned generalization 
of a part of Theorem~D in~\cite{Kle}. We refer to that paper for the 
definition of asymptotic cones.

\begin{Thm}\label{Thm:Dreally}
Let $X$ be a proper metric space with bicombing 
and cocompact isometry group.
Then for every positive integer $n$ the following are equivalent:
\begin{enumerate}
\item[\rm (1)]
There exists an isometric embedding of some $n$-dimensional normed space
into $X$.
\item[\rm (2)]
There is a quasi-isometric embedding of\/ $\R^n$ into $X$.
\item[\rm (3)]
There is a bi-Lipschitz embedding
of\/ $\R^n$ into some asymptotic cone $X_\omega$ of $X$.
\item[\rm (4)]
There is a sequence of subsets $Z_k$ of some asymptotic cone $X_\omega$ of $X$,
a sequence $0<r_k \to 0$, and a norm $\|\cdot\|$ on $\R^n$
such that $r_k^{-1}Z_k$ converges to the unit ball $B \subset (\R^n,\|\cdot\|)$
in the Gromov-Hausdorff topology.
\item[\rm (5)]
There exist a sequence of sets $S_k \subset X$, a sequence $0 < R_k \to \infty$, 
and a norm $\|\cdot\|$ on $\R^n$ such that $R_k^{-1}S_k$ converges to the 
unit ball $B \subset (\R^n,\|\cdot\|)$ in the Gromov-Hausdorff topology.
\end{enumerate}
\end{Thm}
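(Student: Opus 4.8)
I would prove all five statements equivalent by closing the cycle $(1)\Rightarrow(2)\Rightarrow(3)\Rightarrow(4)\Rightarrow(5)\Rightarrow(1)$. The implication $(5)\Rightarrow(1)$ is exactly Theorem~\ref{Thm:D} applied to the sequences $S_k$, $R_k$ and the norm $\|\cdot\|$ furnished by~(5); this is the one step that uses the bicombing and the cocompactness, and the work for it has already been done in Lemmas~\ref{LemStep1}--\ref{LemStep3}. The remaining implications are, apart from $(3)\Rightarrow(4)$, routine bookkeeping with ultralimits and Gromov-Hausdorff limits; $(3)\Rightarrow(4)$ is where the real content lies, and it rests on a metric differentiation argument. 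One could alternatively quote the corresponding soft arguments from \cite{Kle,Wen}, but I sketch direct proofs.

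\textbf{The routine implications.} For $(1)\Rightarrow(2)$: an $n$-dimensional normed space is $(\R^n,\|\cdot\|_V)$ for some norm, and every linear isomorphism $(\R^n,\|\cdot\|_2)\to(\R^n,\|\cdot\|_V)$ is bi-Lipschitz, so post-composing with the given isometric embedding yields a bi-Lipschitz, hence quasi-isometric, embedding $\R^n\hookrightarrow X$. For $(2)\Rightarrow(3)$: fix a non-principal ultrafilter $\omega$, a scale sequence $\lam_j\to\infty$, and form $X_\omega$ with basepoint the image of $0$ under the given quasi-isometric embedding $f\colon\R^n\to X$; the additive constant in the quasi-isometry inequality washes out in the ultralimit, so $f$ induces an $L$-bi-Lipschitz map on the cones, and since the asymptotic cone of $(\R^n,\|\cdot\|_2)$ is again $(\R^n,\|\cdot\|_2)$ we obtain a bi-Lipschitz embedding $\R^n\to X_\omega$. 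For $(4)\Rightarrow(5)$: write $X_\omega=\lim_\omega(X,\tfrac1{\lam_j}d_X,o_j)$; each (compact) $Z_k$ is, through a finite net, Gromov-Hausdorff close to a $\tfrac1{\lam_j}$-rescaled finite subset of $X$ for $\omega$-most $j$, so a diagonal choice of indices $j(k)$ with $\lam_{j(k)}$ growing fast enough (e.g. $\lam_{j(k)}\ge r_k^{-2}$) produces sets $S_k\subset X$ and scales $R_k:=r_k\lam_{j(k)}\to\infty$ with $R_k^{-1}S_k\to B$; in short, asymptotic subsets of an asymptotic cone of $X$ are asymptotic subsets of $X$.

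\textbf{The step $(3)\Rightarrow(4)$.} Let $g\colon\R^n\to X_\omega$ be a bi-Lipschitz embedding. Pull the metric back, $\rho(v,w):=d(g(v),g(w))$, a metric on $\R^n$ bi-Lipschitz to $\|\cdot\|_2$. Viewing $g$ as a Lipschitz map into the complete metric space $X_\omega$, Kirchheim's metric differentiation theorem provides, for $\mathcal{L}^n$-a.e.\ $a\in\R^n$, a seminorm $\|\cdot\|_a$ on $\R^n$ with
\[
r^{-1}d\big(g(a+ru),g(a+rw)\big)\;\longrightarrow\;\|u-w\|_a \qquad (r\to0^+)
\]
uniformly for $u,w$ in the Euclidean unit ball; since $g$ is bi-Lipschitz, $\|u-w\|_a\ge L^{-1}\|u-w\|_2$, so $\|\cdot\|_a$ is in fact a norm. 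Fix such an $a$, put $\|\cdot\|:=\|\cdot\|_a$ and $B:=\{v\in\R^n:\|v\|\le1\}$, choose any $r_k\to0$, and set $Z_k:=g(a+r_kB)\subset X_\omega$. The bijection $v\mapsto g(a+r_kv)$ from $(B,\|\cdot\|)$ onto $Z_k$ equipped with $r_k^{-1}d$ distorts distances by at most a quantity $\to0$ (uniformly in $B$, a fixed bounded set), so $r_k^{-1}Z_k$ converges to $B$ in the Gromov-Hausdorff topology, which is exactly condition~(4). No use of the bicombing or of cocompactness is made here; those enter only through Theorem~\ref{Thm:D}.

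\textbf{Main obstacle.} The only step requiring genuine work beyond Theorem~\ref{Thm:D} is $(3)\Rightarrow(4)$, and within it the appeal to metric differentiation for a map into an abstract metric space; the delicate points are extracting a bona fide norm (rather than a degenerate seminorm) at the chosen point of differentiability — which bi-Lipschitzness guarantees — and securing the uniformity of the approximation on the unit ball, so that the rescaled images genuinely converge in the Gromov-Hausdorff sense. Everything else is standard manipulation of ultralimits and Gromov-Hausdorff limits.
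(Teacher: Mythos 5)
Your proposal is correct and follows essentially the same route as the paper: $(1)\Rightarrow(2)\Rightarrow(3)$ are treated as routine, $(5)\Rightarrow(1)$ is Theorem~\ref{Thm:D}, $(4)\Rightarrow(5)$ is the standard passage from asymptotic subsets of a cone back to $X$ (the paper cites the last paragraph of p.~455 in \cite{Kle}), and $(3)\Rightarrow(4)$ is a metric differentiation argument (the paper cites Proposition~10.18 in \cite{Kle} and Corollary~2.2 in \cite{Wen}). The only difference is that you spell out the two cited steps, using Kirchheim's metric differentiation theorem for $(3)\Rightarrow(4)$, which is a legitimate self-contained version of the same argument.
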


\begin{proof}
The implications $(1) \Rightarrow (2) \Rightarrow (3)$ are clear, whereas
$(5) \Rightarrow (1)$ is just Theorem~\ref{Thm:D}. The implication 
$(4) \Rightarrow (5)$ is shown as in the last paragraph on p.~455 
in~\cite{Kle}. The implication $(3) \Rightarrow (4)$ is proved by a metric 
differentiation argument, compare Proposition~10.18 in~\cite{Kle} and 
Corollary~2.2 in~\cite{Wen}.
\end{proof}


\addcontentsline{toc}{section}{References}

\bigskip\noindent
D.~Descombes ({\tt dominic.descombes@math.ethz.ch}),\\
Department of Mathematics, ETH Zurich, 8092 Zurich, Switzerland 


\end{document}